\documentclass[12pt, letterpaper]{article}
\usepackage{amssymb}
\usepackage{amsmath}
\usepackage[top=1.2in, bottom=1.4in, left=1in, right=1in]{geometry}
\pagestyle{plain}
\usepackage{parskip}
 \usepackage[T1]{fontenc}
\usepackage{lmodern}
 \usepackage{centernot}
\usepackage{epsfig}
\usepackage{amsmath,amsthm,amsfonts,amssymb,cite,amscd}
\setlength{\parskip}{0.2\baselineskip}
\usepackage{indentfirst}
\setlength{\parindent}{2em}

\usepackage{mathrsfs}
\usepackage{amsthm}
\usepackage{tikz}
\usepackage{mathtools}
\usepackage{calc}
\usepackage[symbol]{footmisc}
\usetikzlibrary{patterns,arrows,decorations.pathreplacing}
\usetikzlibrary{fadings}
\usepackage{pgfplots}
\usepackage{parskip}
\newtheorem{theorem}{Theorem}

\newtheorem{lemma}[theorem]{Lemma}

\newtheorem{definition}{Definition}

\newtheorem{conjecture}{Conjecture}

\DeclareMathOperator{\ex}{ex}
\DeclareMathOperator{\otherwise}{otherwise}
\newtheorem{proposition}[theorem]{Proposition}

\begin{document}
\baselineskip=0.30in

\begin{center}
{\LARGE  \bf \textbf{The Tur\'an number of the square of a path}}\\
\vskip 0.3cm
\author\centerline{Chuanqi Xiao \footnote[1]{email:chuanqixm@gmail.com}\\
Central European University, Budapest, Hungary\\
 Gyula O.H. Katona
\footnote[2]{email:katona.gyula.oh@renyi.hu}\\
 MTA R\'{e}nyi Institute, Budapest, Hungary\\
  Jimeng Xiao\\
 Northwestern Polytechnical University, Xi$^{'}$an, China\\
 MTA R\'{e}nyi Institute, Budapest, Hungary}\\
 Oscar Zamora\\
 Central European University, Budapest, Hungary\\
 Universidad de Costa Rica, San Jos\'{e}, Costa Rica\\

\end{center}

{\bf Abstract}\ \
The Tur\'an number of a graph $H$, $\ex(n,H)$, is the maximum number of edges in a
graph on $n$ vertices which does not have $H$ as a subgraph. Let $P_{k}$ be the path with $k$ vertices, the square $P^{2}_{k}$ of $P_{k}$ is obtained by joining the pairs of
vertices with distance one or two in $P_{k}$. The powerful theorem of Erd\H{o}s, Stone and Simonovits determines the asymptotic
behavior of $\ex(n,P^{2}_{k})$.
In the present paper, we determine the exact value of $\ex(n,P^{2}_{5})$ and $\ex(n,P^{2}_{6})$ and pose a conjecture for the exact value of $\ex(n,P^{2}_{k})$.\\
{\bf Keywords}\ \ Tur\'an number, Extremal graphs, Square of a path.
\section{ Introduction}
\noindent In this paper, all graphs considered are undirected, finite and contain neither loops nor multiple edges. Let $G$ be such a graph, the vertex set of $G$ is denoted by $V(G)$, the edge set of $G$ by $E(G)$, and the number of edges in $G$ by $e(G)$. We denote the degree of a vertex $v$ by $d(v)$, the minimum degree in graph $G$ by
$\delta(G)$, the neighborhood of $v$ by $N(v)$ and the chromatic
number of graph $G$ by $\chi(G)$.

The following graphs will be studied in the present paper. Let $P_{k}$ be the path with $k$ vertices, the square $P^{2}_{k}$ of $P_{k}$ is obtained by joining the pairs of vertices with distance one or two in $P_{k}$, see Figure \ref{P_k}. The Tur\'an number of a graph $H$, $\ex(n,H)$, is the maximum number of edges in a graph on $n$ vertices which does not have $H$ as a subgraph. Our goal in this paper is to study $\ex(n,P^{2}_{k})$ and the extremal graphs for $P^2_k$. The Erd\H{o}s-Stone-Simonovits
Theorem \cite{EDR1,EDR2} asymptotically determines $\ex(n,H)$ for all non-bipartite graphs $H$:\\
\centerline{$\ex(n,H)=(1-\frac{1}{\chi(H)-1})\binom{n}{2}+o(n^{2})$.}
Since $\chi(P^{2}_{k})=3$, $k\geq3$, we have $\ex(n,P^{2}_{k})=\frac{n^{2}}{4}+o(n^{2})$. Yet, it still remains interesting to determine the exact value of $\ex(n,P^{2}_{k})$.
\begin{figure}[ht]
\centering
\begin{tikzpicture}

\draw[blue,thick] (0,0) arc(180:0:1cm and 1cm) (1,0) arc(180:0:1cm and 1cm) (2,0) arc(180:0:1cm and 1cm) (3,0) arc(180:0:1cm and 1cm) (6,0) arc(180:0:1cm and 1cm) (7,0) arc(180:0:1cm and 1cm) (6,0) arc(180:0:1cm and 1cm);

\filldraw (0,0) circle (2pt) node[below]{$v_1$} -- (1,0) circle (2pt) node[below]{$v_2$} -- (2,0) circle (2pt) node[below]{$v_3$} -- (3,0) circle (2pt) node[below]{$v_4$}--(4,0) circle (2pt)node[below]{$v_5$} (7,0) circle (2pt) node[below]{$v_{k-2}$} -- (8,0) circle (2pt) node[below]{$v_{k-1}$} -- (9,0) circle (2pt) node[below]{$v_k$};

\draw[dashed] (4,0) -- (7,0);
\end{tikzpicture}
\caption{Graph $P^{2}_{k}$}
\label{P_k}
\end{figure}

The very first result of extremal graph theory gave the value of $\ex(n,P^{2}_{3})$.
\begin{theorem}[\textbf{Mantel}\cite{MAN}] The maximum number of edges in an $n$-vertex triangle-free graph is $\lfloor \frac{n^{2}}{4}\rfloor$, that is $\ex(n, P^{2}_{3})=\lfloor \frac{n^2}{4}\rfloor$. Furthermore, the only triangle-free graph with $\lfloor \frac{n^2}{4}\rfloor$ edges is the complete bipartite graph $K_{\lfloor \frac{n}{2}\rfloor, \lceil\frac{n}{2} \rceil}$.
\end{theorem}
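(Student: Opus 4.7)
The plan is to combine a local structural property of triangle-free graphs with a global counting inequality. The key local observation is that for any edge $uv$ of a triangle-free graph $G$ the neighbourhoods $N(u)$ and $N(v)$ are disjoint (a common neighbour would create a triangle), and since $N(u)\cup N(v)\subseteq V(G)$ one obtains the edge inequality $d(u)+d(v)\le n$.

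I would then sum this inequality over every edge of $G$. The left-hand side reorganises as $\sum_{w\in V(G)} d(w)^{2}$, because each vertex $w$ is counted once for every edge incident to it, so $\sum_{w} d(w)^{2}\le n\cdot e(G)$. Applying the Cauchy--Schwarz inequality (or the power-mean inequality) to the degree sequence gives
\[
\sum_{w} d(w)^{2}\ \ge\ \frac{\bigl(\sum_{w} d(w)\bigr)^{2}}{n}\ =\ \frac{4\,e(G)^{2}}{n},
\]
and chaining the two estimates yields $e(G)\le n^{2}/4$. Since $e(G)$ is an integer, this sharpens to $e(G)\le \lfloor n^{2}/4\rfloor$, establishing the extremal bound.

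For the uniqueness part I would trace the equality conditions in both inequalities. Tightness in Cauchy--Schwarz forces $G$ to be regular, say of degree $d$, and tightness in $d(u)+d(v)\le n$ on every edge forces $2d=n$, so $n$ is even. Then for any fixed edge $uv$ the sets $N(u)$ and $N(v)$ are disjoint of total size $n$, hence partition $V(G)$; both parts are independent by triangle-freeness; and since each vertex in $N(u)$ has $n/2$ neighbours, none in $N(u)$, it must be joined to every vertex of $N(v)$, giving $G=K_{n/2,n/2}$.

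The main obstacle is the odd-$n$ case, where $n^{2}/4$ is not an integer and the equality analysis above cannot be applied directly. To handle this I would examine a vertex $v$ of maximum degree $\Delta$: since $N(v)$ is independent, every edge has an endpoint in $V\setminus N(v)$, so $e(G)\le \Delta(n-\Delta)\le \lfloor n^{2}/4\rfloor$ by AM--GM. An extremal graph must therefore satisfy $\Delta=\lfloor n/2\rfloor$ or $\lceil n/2\rceil$, and tracking where the two inequalities can simultaneously be tight forces the graph to be bipartite with parts $N(v)$ and $V\setminus N(v)$ and every possible edge between them present, i.e.\ $G=K_{\lfloor n/2\rfloor,\lceil n/2\rceil}$. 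This odd-case bookkeeping is the only genuinely fiddly step; everything else is a one-line consequence of the Cauchy--Schwarz chain.
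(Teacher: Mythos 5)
The paper does not prove this statement at all: it is Mantel's classical theorem, quoted with a citation and used as a black box, so there is no in-paper argument to compare against. Your proposal is a correct, self-contained proof. The edge inequality $d(u)+d(v)\le n$, the identity $\sum_{uv\in E}(d(u)+d(v))=\sum_w d(w)^2$, the Cauchy--Schwarz chain $4e^2/n\le \sum_w d(w)^2\le n\,e(G)$, and the equality analysis in the even case are all sound. The one remark worth making is that your ``odd-case'' argument via a maximum-degree vertex $v$ --- namely $e(G)\le \Delta(n-\Delta)\le\lfloor n^2/4\rfloor$ since $N(v)$ is independent, with equality forcing $V\setminus N(v)$ to be independent, every vertex there to have degree $\Delta$ and hence to be joined to all of $N(v)$ --- already proves both the bound and the uniqueness for every $n$, even or odd; the Cauchy--Schwarz passage is therefore redundant rather than the main engine. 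Either route alone suffices, and both are standard.
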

The case $k=4$ was solved by Dirac in a more general context.
\begin{theorem}[\textbf{Dirac}\cite{DIR}]\label{DIR} The maximum number of edges in an $n$-vertex $P^{2}_{4}$-free graph is $\lfloor\frac{n^{2}}{4}\rfloor$, that is $\ex(n, P^{2}_{4})= \lfloor\frac{n^{2}}{4}\rfloor$, $(n\geq4)$. Furthermore, when $n\geq5$, the only extremal graph is the complete bipartite graph $K_{\lfloor \frac{n}{2}\rfloor, \lceil\frac{n}{2} \rceil}$.
\end{theorem}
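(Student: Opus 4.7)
The pivotal observation is that $P_4^2$ is isomorphic to $K_4-e$, the unique four-vertex graph with five edges, equivalently two triangles sharing a common edge. Consequently $G$ is $P_4^2$-free if and only if every edge of $G$ lies in at most one triangle, i.e.\ any two adjacent vertices of $G$ have at most one common neighbour. This local reformulation drives the entire argument.

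The lower bound is immediate: $K_{\lfloor n/2\rfloor,\lceil n/2\rceil}$ is bipartite, hence triangle-free and therefore $P_4^2$-free, and it has $\lfloor n^2/4\rfloor$ edges.

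For the upper bound I would induct on $n$, handling the small base cases ($n\le 5$) by direct inspection. For the inductive step, let $G$ be a $P_4^2$-free graph on $n$ vertices. If $G$ is triangle-free, Mantel's theorem (Theorem~1) immediately yields $e(G)\le\lfloor n^2/4\rfloor$ and forces $G=K_{\lfloor n/2\rfloor,\lceil n/2\rceil}$ in case of equality. Otherwise, pick any triangle $\{a,b,c\}$ of $G$: the local reformulation implies that each vertex $v\notin\{a,b,c\}$ has at most one neighbour in $\{a,b,c\}$, so
\[
e(G)\;\le\;3+(n-3)+e\bigl(G-\{a,b,c\}\bigr)\;\le\;n+\Bigl\lfloor\frac{(n-3)^2}{4}\Bigr\rfloor,
\]
using the induction hypothesis on $G-\{a,b,c\}$. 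A direct arithmetic check confirms $n+\lfloor(n-3)^2/4\rfloor\le\lfloor n^2/4\rfloor$ for every $n\ge 4$, with strict inequality once $n\ge 6$.

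For the uniqueness statement the triangle-free subcase is settled by Mantel's theorem, while the triangle subcase is ruled out by the strict inequality above once $n$ is sufficiently large. The main obstacle is the boundary of the induction at small $n$: here the inductive estimate is tight and one must analyse the equality case of the inductive step carefully (tracking which vertex outside $\{a,b,c\}$ is joined to which element of the triangle, and which edge of $G-\{a,b,c\}$ may appear) in order to exclude non-bipartite extremal configurations and thereby pin down $K_{\lfloor n/2\rfloor,\lceil n/2\rceil}$ as the unique extremal graph.
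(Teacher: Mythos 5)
The paper does not actually prove this statement --- it is quoted from Dirac's 1963 paper as a known result --- so there is no internal proof to compare yours against; I can only assess your argument on its own. Your key reduction is correct: $P_4^2\cong K_4-e$, so a graph is $P_4^2$-free exactly when every edge lies in at most one triangle. The lower bound is correct, and the triangle-deletion induction does give $\ex(n,P_4^2)\le\lfloor n^2/4\rfloor$ for all $n\ge4$, together with uniqueness of the extremal graph for $n\ge7$: for $n\ge7$ the induction hypothesis legitimately applies to the $(n-3)$-vertex remainder and $n+\lfloor(n-3)^2/4\rfloor<\lfloor n^2/4\rfloor$, so an extremal graph must be triangle-free and Mantel finishes.

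There are, however, two genuine problems at the boundary you defer. First, at $n=6$ your inductive step invokes the hypothesis on a $3$-vertex graph, where the bound $\lfloor(n-3)^2/4\rfloor=2$ is false ($K_3$ is $P_4^2$-free with $3$ edges); the correct count is $e(G)\le 3+3+3=9=\lfloor 6^2/4\rfloor$, so your claimed strict inequality ``once $n\ge6$'' fails at $n=6$. Second, and decisively, the uniqueness you plan to salvage by a careful equality analysis at small $n$ cannot be salvaged, because the statement is false there: the bowtie (two triangles sharing a vertex) is a $P_4^2$-free graph on $5$ vertices with $6=\lfloor 5^2/4\rfloor$ edges, and the triangular prism (two disjoint triangles joined by a perfect matching) is a $P_4^2$-free graph on $6$ vertices with $9=\lfloor 6^2/4\rfloor$ edges; in both graphs every edge lies in at most one triangle, so neither contains $K_4-e$. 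Hence uniqueness of $K_{\lfloor n/2\rfloor,\lceil n/2\rceil}$ holds only for $n\ge7$, which is exactly what your induction proves; the version quoted in the paper (uniqueness for all $n\ge5$) needs this caveat. Fortunately the rest of the paper uses only the bound $\lfloor n^2/4\rfloor$ from this theorem, never the uniqueness.
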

For $k=5$, our results are given in the next two theorems, where we separate the result for the Tur\'an number and the extremal graphs for $P^2_5$.
\begin{theorem}\label{P5}
The maximum number of edges in an $n$-vertex $P^{2}_{5}$-free graph is $\lfloor\frac{n^{2}+n}{4}\rfloor$, that is $\ex(n, P^{2}_{5})= \lfloor\frac{n^{2}+n}{4}\rfloor$, $(n\geq5)$.
\end{theorem}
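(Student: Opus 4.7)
The plan is to prove the lower bound by construction and the upper bound by induction on $n$.

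For the lower bound, take $G^{\ast}=K_{\lfloor n/2\rfloor,\lceil n/2\rceil}$ and add a maximum matching inside the larger color class; a direct count gives
\[
e(G^{\ast})=\left\lfloor\frac{n}{2}\right\rfloor\left\lceil\frac{n}{2}\right\rceil+\left\lfloor\frac{\lceil n/2\rceil}{2}\right\rfloor=\left\lfloor\frac{n^{2}+n}{4}\right\rfloor.
\]
To verify $G^{\ast}$ is $P^{2}_{5}$-free, inspect any five vertices and let $k$ be the number drawn from the matched side: for $k\in\{0,1,4,5\}$ the induced subgraph has fewer than the $7$ edges of $P^{2}_{5}$, while for $k\in\{2,3\}$ the edge-maximal subgraph has degree sequence $(4,4,2,2,2)$ or $(3,3,3,3,2)$, neither matching the degree sequence $(4,3,3,2,2)$ of $P^{2}_{5}$.

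Write $D(n)=\lfloor(n^{2}+n)/4\rfloor$ and induct on $n$. For the base case $n=5$, any $5$-vertex graph $G$ with $\ge 8$ edges has at most two non-edges; since the three non-edges of $P^{2}_{5}$ form a $P_{4}$, the two non-edges of $G$ (whether a matching or a $P_{3}$) always embed into this $P_{4}$, so $P^{2}_{5}\subseteq G$. For the inductive step with $n\ge 6$, set $d^{\ast}=D(n)-D(n-1)$, a quantity of order $n/2$. If $G$ has a vertex $v$ with $d(v)\le d^{\ast}$, removing $v$ gives a $P^{2}_{5}$-free graph on $n-1$ vertices, and by induction $e(G)\le d^{\ast}+D(n-1)=D(n)$.

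The main obstacle is the remaining case $\delta(G)\ge d^{\ast}+1$, in which I must show $G$ already contains $P^{2}_{5}$. The key structural fact is that $G[N(v)]$ is $P_{4}$-free for every $v$: any path $u_{1}u_{2}u_{3}u_{4}$ inside $N(v)$ produces a $P^{2}_{5}$ on $\{u_{1},u_{2},v,u_{3},u_{4}\}$ with $v$ as middle vertex. Hence every $G[N(v)]$ is a vertex-disjoint union of stars and triangles. From $\delta(G)\ge d^{\ast}+1\approx n/2+1$ one gets $2\delta(G)-n\ge 1$, with the exact value determined by $n\bmod 4$; since the minimum degree of a disjoint union of stars and triangles is at most $2$, the regime $2\delta(G)-n\ge 3$ gives an immediate contradiction, while $2\delta(G)-n=2$ forces $G[N(v)]$ to be a disjoint union of triangles (so $3\mid d(v)$), and the bounds $d^{\ast}+1\le d(v)\le n-1$ typically leave no admissible $d(v)$, or else pin $G$ down to a small family such as $K_{n}$ minus a perfect matching in which $P^{2}_{5}$ is located directly on a well-chosen $5$-subset. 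The sharpest subcase is $n\equiv 1\pmod 4$ with $2\delta(G)-n=1$, where a finer count (for instance via $\sum_{v}e(G[N(v)])=3t(G)$ or a common-neighbor argument between pairs of vertices) is needed to close the gap.
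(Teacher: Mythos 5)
Your lower bound and your base case $n=5$ are correct (the complement argument via $\overline{P^2_5}=P_4\cup K_1$ is a nice touch), and the key structural observation --- that $G[N(v)]$ must be $P_4$-free, hence a disjoint union of stars and triangles --- is valid. But the proof is not complete: the entire case $\delta(G)\ge d^*+1$ is only sketched, and it is precisely the hard part. Two concrete problems. First, in the regime $2\delta(G)-n=2$ (which occurs for $n\equiv 0,2\pmod 4$ with $\delta=\frac{n}{2}+1$) you claim the bounds ``typically leave no admissible $d(v)$''; that is false --- $3\mid(\frac{n}{2}+1)$ holds for infinitely many $n$, e.g.\ $n\equiv 4\pmod{12}$, so one must actually exhibit a $P^2_5$. (It can be done: each neighborhood being a perfect union of triangles forces a $K_4$ through every edge, and an edge count shows some fifth vertex meets that $K_4$ in at least two vertices, which yields $P^2_5$; but this argument is absent from your write-up.) Second, and more seriously, you explicitly leave open the subcase $n\equiv 1\pmod 4$, $2\delta(G)-n=1$, saying only that ``a finer count\dots is needed to close the gap.'' Here the neighborhoods are unions of stars and triangles with no isolated vertex, the degree-sum bound gives $e(G)\ge n(n+1)/4>D(n)$, and no contradiction follows from the structure you have established; an actual embedding of $P^2_5$ must be produced. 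As it stands, the statement is unproved for a positive-density set of $n$.

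For comparison, the paper sidesteps minimum-degree analysis entirely: it inducts from $n-4$ to $n$ by a trichotomy on whether $G$ contains $P^2_4$ and $K_4$. If $P^2_4\nsubseteq G$ Dirac's theorem gives $e(G)\le\lfloor n^2/4\rfloor$; if $P^2_4\subseteq G$ but $K_4\nsubseteq G$, every outside vertex sends at most $2$ edges to the copy of $P^2_4$; if $K_4\subseteq G$, every outside vertex sends at most $1$ edge to it. Each branch closes with the induction hypothesis on $n-4$ vertices (with base cases $n\le 3$ and $n=8$). That route trades your delicate neighborhood analysis for a short counting argument, at the cost of needing Dirac's theorem and four base cases. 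If you want to salvage your approach, the $K_4$-plus-two-attachments observation (a vertex adjacent to two vertices of a $K_4$, or to suitable pairs of a $P^2_4$, creates a $P^2_5$) is exactly the tool you are missing in both unresolved subcases.
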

\begin{definition}
 Let $E^{i}_{n}$ denote a graph obtained from a complete bipartite graph $K_{i,n-i}$ plus a maximum matching in the class which has $i$ vertices, see Figure \ref{E^{i}_{n}}.
\end{definition}
\begin{theorem}\label{EP5}
Let $n$ be a natural number, when $n=5$, the extremal graphs for $P_5^2$ are $E^2_5$, $E^3_5$ and $G_0$, where $G_0$ is obtained from a $K_4$ plus a pendent edge. When $n\geq6$, if $n\equiv 1,2~(\bmod~4)$, the extremal graphs for $P_5^2$ are $E^{\lceil\frac{n}{2}\rceil}_{n}$ and $E^{\lfloor\frac{n}{2}\rfloor}_{n}$, otherwise, the extremal graph for $P_5^2$ is $E^{\lceil\frac{n}{2}\rceil}_{n}$.
 \end{theorem}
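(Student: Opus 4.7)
The plan splits into a construction part and a uniqueness part. For the construction I would count $|E(E^i_n)| = i(n-i) + \lfloor i/2 \rfloor$, observe by consecutive differences that this is unimodal in $i$, peaks at $i = \lceil n/2 \rceil$ with value $\lfloor (n^2+n)/4 \rfloor$, and is tied at $i = \lfloor n/2 \rfloor$ precisely when $n \equiv 1 \pmod 4$ (for $n \equiv 2 \pmod 4$ the floor and ceiling coincide, so only one graph appears in the list). The count for $G_0$ is immediate, namely $\binom{4}{2} + 1 = 7 = \lfloor(25+5)/4\rfloor$. For $P_5^2$-freeness of $E^i_n$ I would use the degree sequence $(2,3,4,3,2)$ of $P_5^2$: every triangle of $E^i_n$ is formed by one matching edge in the size-$i$ part together with one vertex of the size-$(n-i)$ part, so placing the central vertex $v_3$ of a hypothetical copy of $P_5^2$ on either side and unwinding the three overlapping triangles $v_1v_2v_3$, $v_2v_3v_4$, $v_3v_4v_5$ quickly forces two incident matching edges, which contradicts the maximality of the matching. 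For $G_0$, the pendant vertex has degree $1$ while $\delta(P_5^2) = 2$ and $P_5^2$ has exactly $5$ vertices, so $P_5^2$ cannot embed into $G_0$ at all.

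For uniqueness at $n = 5$, any extremal graph has $\binom{5}{2} - 7 = 3$ non-edges; running through the four isomorphism classes of $3$-edge graphs on five vertices ($K_3 \cup 2K_1$, $K_{1,3} \cup K_1$, $P_4 \cup K_1$, $P_3 \cup K_2$) and complementing each produces the candidates $E^2_5$, $G_0$, $P_5^2$ itself, and $E^3_5$, respectively. Discarding the third (which obviously contains $P_5^2$) leaves exactly the three graphs listed in the theorem.

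For $n \geq 6$ the strategy is induction. If $G$ attains $\ex(n,P_5^2) = \lfloor(n^2+n)/4\rfloor$ and $v \in V(G)$ has minimum degree $\delta$, then $e(G-v) \leq \ex(n-1, P_5^2) = \lfloor(n^2-n)/4\rfloor$ by Theorem~\ref{P5}, which forces $\delta$ to equal the exact difference $\lfloor(n^2+n)/4\rfloor - \lfloor(n^2-n)/4\rfloor$ (so $\delta \in \{\lfloor n/2\rfloor, \lceil n/2\rceil\}$) and simultaneously forces $G - v$ to be extremal. By induction, $G - v$ is isomorphic to one of the listed graphs $E^j_{n-1}$ (or, when $n - 1 = 5$, one of $E^2_5, E^3_5, G_0$, the last of which I would need to rule out separately). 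It then remains to re-attach $v$ and show that the only $P_5^2$-free extensions are $E^{\lceil n/2 \rceil}_n$ (and $E^{\lfloor n/2\rfloor}_n$ in the permitted parities). The main obstacle is this re-attachment step: once $G - v$ is pinned down, one has to control $N(v)$ tightly using $P_5^2$-freeness, observing that if two matching edges of $G-v$ lie inside $N(v)$ they create a $P_5^2$ through $v$, and more generally that the pairs of neighbours of $v$ interact with the matching of $G-v$ in a very restricted way. Careful bookkeeping of these restrictions — together with the requirement that $v$ contribute exactly $\delta$ edges — should then pin the global structure to one of the stated extremal graphs and rule out any $G_0$-like exception from appearing for $n \geq 6$.
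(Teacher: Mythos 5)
Your skeleton is genuinely different from the paper's: the paper inducts from $n-4$ to $n$ by deleting a copy of $P_4^2$ (or of $K_4$), reusing the three-case split from its proof of Theorem 3 and the rigidity statement that every outside vertex meets the deleted $P_4^2$ in exactly two vertices; you instead delete a minimum-degree vertex and induct from $n-1$. That alternative is viable (the forced value of $\delta(G)$ and the extremality of $G-v$ do follow, once you add the average-degree bound $\delta(G)\le 2e(G)/n$ that you only use implicitly), and your $n=5$ complement enumeration and the $P_5^2$-freeness checks for $E^i_n$ and $G_0$ are correct. But the decisive step --- classifying $N(v)$ once $G-v\cong E^j_{n-1}$ is known --- is exactly where all the content of the theorem lives, and you leave it at ``careful bookkeeping \dots should then pin the global structure.'' One actually has to prove that $N(v)$ is forced to be either the whole class $X'$ or $Y'$ together with the unmatched vertex of $X'$ (when one exists), by checking for each candidate neighbourhood which vertex of $G$ acquires a $P_4$ inside its neighbourhood (equivalently, a copy of $P_5^2=K_1+P_4$); your single observation about two matching edges inside $N(v)$ is one of several needed obstructions, not the whole argument. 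As it stands this is a plan, not a proof, of the uniqueness half.

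There is also a concrete error in your construction-side analysis. From $e(E^i_n)=i(n-i)+\lfloor i/2\rfloor$ the consecutive difference is $n-2i-1+[i\ \mathrm{odd}]$, so for $n\equiv 2\pmod 4$ the maximum is attained at \emph{both} $i=\frac{n}{2}$ and $i=\frac{n}{2}+1$ (e.g.\ $e(E^3_6)=e(E^4_6)=10$), not at a single value of $i$ as you claim; the paper's own base case for $n=6$ lists $E^3_6$ and $E^4_6$ as extremal. A correctly executed reattachment step will in fact produce $E^{\frac{n}{2}+1}_n$ (attach $v$ to $Y'$ plus the unmatched vertex of $X'$ in $E^{\frac{n}{2}}_{n-1}$), so your uniqueness argument, if completed, would contradict your stated conclusion that ``only one graph appears in the list'' for $n\equiv 2\pmod 4$. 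You need to fold this second extremal graph into both halves of your proof.
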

  \begin{figure}[ht]
\centering
\begin{tikzpicture}

\foreach \x in {-.7,-1.4,.7,1.4,2.1,2.8,3.5}{
\foreach \y in {-.7,-1.4,.7,1.4,2.1,2.8,3.5}{

\filldraw (\y,0)  -- (\x,-2) ;

}
}

\foreach \x in {-.7,-1.4,.7,1.4,2.1,2.8,3.5}{

\filldraw (\x,0) circle (2pt)  (\x,-2) circle (2pt);

}

\draw (0,0) node{$\dots$} (0,-2) node{$\dots$};

%\filldraw (2.1,0) circle (2pt)  (2.1,-2) circle (2pt);

\draw (-.7,0) -- (-1.4,0)  (.7,0) -- (1.4,0)  (2.1,0) -- (2.8,0);

%\draw (1.05,0) node{$\cdots$} (1.05,-2) node{$\cdots$} ;

\draw  (3.9,-2) arc (0:360:2.85cm and .5cm) node[align=center,right]{$Y$};

\draw (-2.5,0) node[right]{$i$};

\draw (-3,-2) node[right]{$n-i$};

\draw (3.9,-1) node[right]{$K_{i,n-i}$};

\draw (3.9,0) arc (0:360:2.85cm and .5cm) node[align=center,right]{$X$};

\end{tikzpicture}
\caption{Graph $E^{i}_{n}$}
\label{E^{i}_{n}}
\end{figure}
\begin{definition}
Let $T$ denote the flattened tetrahedron, see $T$ in Figure \ref{T_n}.
\end{definition}
Although the determination of $\ex(n,T)$ is not within the main lines of our paper, we need the exact value of $\ex(n,T)$ in order to determine $\ex(n,P^2_6)$.
\begin{theorem}\label{T}
The maximum number of edges in an $n$-vertex $T$-free graph ($n\neq 5$) is,
$$ \ex(n,T)=\left\{
\begin{aligned}
&\left\lfloor\frac{n^{2}}{4}\right\rfloor+\left\lfloor\frac{n}{2}\right\rfloor,& n\not\equiv2~(\bmod~4), \\
&\frac{n^{2}}{4}+\frac{n}{2}-1, & n\equiv2~(\bmod~4).
\end{aligned}
\right.
$$
\end{theorem}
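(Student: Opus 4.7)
The plan is to split the argument into a lower bound construction and an upper bound structural analysis, with the anomaly at $n \equiv 2 \pmod{4}$ arising from a parity obstruction already visible in the construction.

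For the lower bound, I would propose the graph $G^{\ast}$ obtained from the complete bipartite graph $K_{\lfloor n/2 \rfloor, \lceil n/2 \rceil}$ by adding a maximum matching inside each of the two color classes. The matchings contribute $\lfloor \lfloor n/2 \rfloor /2 \rfloor + \lfloor \lceil n/2 \rceil /2 \rfloor$ extra edges; this quantity equals $\lfloor n/2 \rfloor$ whenever at least one of the two classes has even size, and drops to $\lfloor n/2 \rfloor - 1$ exactly when both classes have odd size, i.e., when $n \equiv 2 \pmod{4}$. The two branches of the claimed formula correspond precisely to this dichotomy. The key verification is that $G^{\ast}$ contains no flattened tetrahedron: since the bipartite part of $G^{\ast}$ is triangle-free, every triangle of $G^{\ast}$ must consist of a single matching edge together with one vertex on the opposite side. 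The rigid four-triangle structure of $T$ cannot be assembled under this constraint, because a short case analysis on how the central triangle of $T$ is placed forces two distinct vertices to play the role of the same matching partner.

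For the upper bound I would induct on $n$, checking the small base cases by hand. The value $n = 5$ is excluded since $K_{5}$ vacuously avoids $T$, making the general formula fail there. Given an extremal $T$-free graph $G$ on $n$ vertices, split on $\delta(G)$. If $\delta(G) \le \lfloor n/2 \rfloor$, delete a vertex of minimum degree and apply the inductive bound to $G - v$; an arithmetic check performed separately for each residue of $n$ modulo $4$ shows that $\ex(n-1, T) + \lfloor n/2 \rfloor$ never exceeds the claimed value of $\ex(n, T)$. If $\delta(G) \ge \lfloor n/2 \rfloor + 1$, the graph is dense enough that a common-neighborhood argument should force $T$: the high minimum degree supplies an edge $uv$ whose endpoints share many common neighbors, and from this dense local structure one extracts the four interlocking triangles that form $T$.

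The main obstacle is this high-degree case of the upper bound, especially when combined with the sharpness at $n \equiv 2 \pmod{4}$. In that residue class the formula loses one edge, so the upper bound must exclude even a single extra edge beyond the matching construction, not merely a large collection of them. I anticipate handling this via an exchange or symmetrization step on a maximum cut of $G$, showing that the two nearly-perfect matchings in the two color classes cannot both be fully saturated when $n \equiv 2 \pmod{4}$ without producing the rigid triangle configuration of $T$ across the cut. This parity-sensitive step is where I expect the proof to be most delicate.
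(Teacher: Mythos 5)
Your lower bound is the paper's: the construction $T^{\lceil n/2\rceil}_{n}$ ($K_{\lfloor n/2\rfloor,\lceil n/2\rceil}$ plus a maximum matching in each class) and the same parity accounting explaining the $-1$ at $n\equiv 2\pmod 4$. The gap is in your high-degree case, and it is fatal as stated: if $\delta(G)\geq\lfloor n/2\rfloor+1$ you claim a common-neighborhood argument ``forces $T$,'' but the extremal graph $T^{n/2}_{n}$ for $4\mid n$ has minimum degree exactly $n/2+1$ and is $T$-free, so no such argument can exist. High density does not force $T$; what it forces (and what the paper proves in Lemma \ref{1}) is only a $K_4$, under the hypothesis that \emph{every edge} $\{x,y\}$ satisfies $d(x)+d(y)\geq n+2$. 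The correct continuation is not a contradiction but a second inductive branch: a counting lemma (Lemma \ref{2}) showing each vertex outside the $K_4$ sends at most $2$ edges to it, whence $e(G)\leq 2n-2+\ex(n-4,T)$, and the bound follows by induction from $n-4$. This in turn requires base cases in every residue class mod $4$ (the paper verifies $n=3,4,6,9$ by hand, with $n=6$ and $n=9$ needing genuine case analysis), which your ``check small cases'' remark underestimates.

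Two further problems. First, for $n\equiv 1\pmod 4$ your two-way split on $\delta(G)$ leaves an uncovered regime: when $\delta(G)=\lceil n/2\rceil$ the edge-degree-sum condition $d(u)+d(v)\geq n+2$ can fail on some edge, so the $K_4$ lemma does not apply, yet the vertex-deletion bound is already exhausted. The paper inserts a third subcase there, deleting both endpoints of an edge with $d(u)+d(v)=n+1$ and inducting from $n-2$ (which is why $n=9$ must be a base case). Second, your proposed ``exchange or symmetrization on a maximum cut'' for the $n\equiv 2\pmod 4$ anomaly is not needed and points in an unpromising direction: in the paper the missing edge in that residue class emerges automatically from the floor-function arithmetic of the same induction, with no separate parity argument.
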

 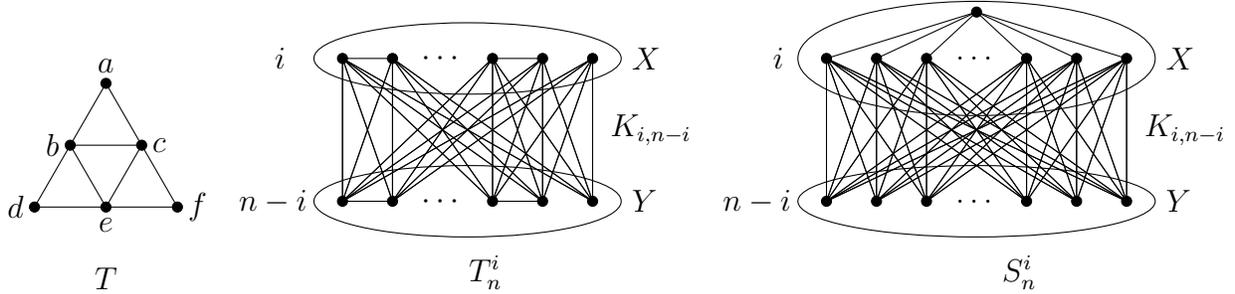
\begin{figure}[ht]
\begin{tikzpicture}[scale=.95]
\filldraw (0,{sqrt(3)}) node[above]{$a$} circle (2 pt) (1,0) node[right]{$f$} circle (2 pt) (-1,0) node[left]{$d$} circle (2 pt) (0,0) node[below]{$e$} circle (2 pt) (.5,{sqrt(3)/2}) node[right]{$c$} circle (2 pt) (-.5,{sqrt(3)/2}) node[left]{$b$} circle (2 pt) (0,-1) node{$T$} (0,2);

\draw (1,0) -- (-1,0) -- (0,{sqrt(3)}) -- (1,0) (0,0) -- (1/2,{sqrt(3)/2}) -- (-1/2,{sqrt(3)/2}) -- (0,0);

\end{tikzpicture}
\begin{tikzpicture}[scale=.95]

\foreach \x in {-.7,-1.4,.7,1.4,2.1}{
\foreach \y in {-.7,-1.4,.7,1.4,2.1}{

\filldraw (\y,0)  -- (\x,-2) ;

}
}

\draw (0,0) node{$\cdots$} (0,-2) node{$\cdots$};

%\filldraw (2.1,0) circle (2pt)  (2.1,-2) circle (2pt);

\draw (-.7,0) -- (-1.4,0)  (.7,0) -- (1.4,0);
\draw (-.7,-2) -- (-1.4,-2) (.7,-2) -- (1.4,-2);

%\draw (.7,0) node{$\cdots$} (0.7,-2) node{$\cdots$} ;

\draw  (2.5,-2) arc (0:360:2.15cm and .5cm) node[align=center,right]{$Y$};

\draw (-2.5,0) node[right]{$i$};

\draw (-3,-2) node[right]{$n-i$};

\draw (2.2,-1) node[right]{$K_{i,n-i}$};

\draw (2.5,0) arc (0:360:2.15cm and .5cm) node[align=center,right]{$X$};

\draw (0.6,-3) node{$T^i_n$};

\foreach \x in {-.7,-1.4,.7,1.4,2.1}{

\filldraw (\x,0) circle (2pt)  (\x,-2) circle (2pt);

}

\end{tikzpicture}
\begin{tikzpicture}[scale=.95]

\foreach \x in {-2.1,-1.4,-.7,.7,1.4,2.1}{
\foreach \y in {-2.1,-1.4,-.7,.7,1.4,2.1}{

\filldraw (\x,0)  -- (\y,-2) ;

}
}

\foreach \x in {-2.1,-1.4,-.7,.7,1.4,2.1}{

\filldraw  (\x,0) circle (2pt) (\x,-2) circle (2pt);
\draw (\x,0) -- (0,.65);
}

\draw (0,0) node {$\dots$} (0,-2) node{$\dots$};

\filldraw (0,.65) circle (2pt);

%\filldraw (2.1,0) circle (2pt)  (2.1,-2) circle (2pt);

%\filldraw (0.35,0) circle (2pt);

\draw  (2.5,-2) arc (0:360:2.5cm and .5cm) node[align=center,right]{$Y$};

\draw (2.2,-1) node[right]{$K_{i,n-i}$};

\draw (-3,0) node[right]{$i$};

\draw (-3.7,-2) node[right]{$n-i$};

\draw (2.5,0) arc (0:360:2.5cm and .8cm) node[align=center,right]{$X$};

\draw (0.6,-3) node{$S^i_n$};

\end{tikzpicture}
 \caption{Graphs $T$, $T^{i}_{n}$ and $S^{i}_{n}$}
 \label{T_n}
\end{figure}

\begin{definition}
Let $T^{i}_{n}$ denote a graph obtained from a complete bipartite graph $K_{i,n-i}$ plus a maximum matching in the class $X$ which has $i$ vertices and a maximum matching in the class $Y$ which has $n-i$ vertices, see $T^i_n$ in Figure \ref{T_n}. Let $S^{i}_{n}$ denote a graph obtained from $K_{i,n-i}$ plus an $i$-vertex star in the class $X$, see $S^{i}_{n}$ in Figure \ref{T_n}.
\end{definition}
\begin{theorem}\label{ET}
Let $n$ $(n\neq5,6)$ be a natural number,

when $n\equiv0~(\bmod~4)$, the extremal graph for $T$ is $T_n^{\frac{n}{2}}$,

 when $n\equiv1~(\bmod~4)$, the extremal graphs for $T$ are $T^{\lceil\frac{n}{2}\rceil}_{n}$ and $S^{\lceil\frac{n}{2}\rceil}_{n}$,
 
 when $n\equiv2~(\bmod~4)$, the extremal graphs for $T$ are $T^{\frac{n}{2}}_{n}$, $T^{\frac{n}{2}+1}_{n}$ and $S^{\frac{n}{2}}_{n}$,
 
  when $n\equiv3~(\bmod~4)$, the extremal graphs for $T$ are $T^{\lceil\frac{n}{2}\rceil}_{n}$ and $S^{\lceil\frac{n}{2}\rceil}_{n}$.
\end{theorem}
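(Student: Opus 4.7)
The plan is to argue in two stages: first verify that every graph listed is $T$-free, and then show that any extremal $T$-free graph has the claimed form via a max-cut analysis of the intra-class edges. For $T$-freeness of $T^i_n$, I would note that every triangle of $T^i_n$ uses two vertices joined by a matching edge in one class together with a single vertex of the opposite class; trying to place the triangle $\{b,c,e\}$ of $T$ with $bc$ a matching edge in $X$ and $e \in Y$ then forces the two degree-$2$ vertices $d,f$ of $T$ whose common neighbor is $e$ to both lie in $Y$ and be matched with $e$ in $F_Y$, which is impossible because a matching saturates $e$ at most once. The case of $S^i_n$ is analogous, exploiting that every triangle of $S^i_n$ contains the star center.

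For uniqueness, I would let $G$ be extremal, pick a bipartition $(X,Y)$ of sizes $i$ and $n-i$ maximizing the number of crossing edges, and write $F_X = G[X]$ and $F_Y = G[Y]$. The core structural claim is the following: if $K_{X,Y}$ sits inside $G$, $F_X$ contains an edge $bc$, and some $e \in Y$ has two distinct $F_Y$-neighbors $d,f$, then picking any $a \in Y \setminus \{d,e,f\}$ produces a copy of $T$ on $\{a,b,c,d,e,f\}$, since all nine edges of $T$ are present (the six bipartite ones automatically, the three intra-class ones by hypothesis). Such an $a$ exists when $|Y| \ge 4$, and the symmetric statement holds with $X$ and $Y$ swapped. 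Therefore, once $\min(|X|,|Y|) \ge 4$, if both $F_X$ and $F_Y$ are nonempty then neither can contain a vertex of degree $\ge 2$, so both are matchings, giving $T^i_n$. If instead one of them, say $F_Y$, is empty, a short additional embedding obtained by mapping any $F_X$-triangle $\{u,v,w\}$ to the triangle $\{b,c,e\}$ of $T$ and placing the handles $a,d,f$ in $Y$ rules out triangles in $F_X$; and the same pattern as above with $e \in X$ playing the role of a degree-$2$ vertex rules out edges of $F_X$ both of whose endpoints have degree $\ge 2$. The upshot is that every edge of $F_X$ has a leaf endpoint, i.e., $F_X$ is a star forest, and maximality forces a single spanning star, yielding $S^i_n$.

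It then remains to establish that the bipartite part is complete and to optimize over $i$. For completeness, if some $xy$ with $x \in X$, $y \in Y$ is a non-edge, then by extremality $G + xy$ contains a copy of $T$ that uses the new edge, and a short case analysis based on the rigid matching-or-star structure of $F_X,F_Y$ rules this out, forcing $e(X,Y) = i(n-i)$. The total edge count is then either $i(n-i) + \lfloor i/2 \rfloor + \lfloor (n-i)/2 \rfloor$ or $i(n-i) + (i-1)$, and maximizing each in $i$ and comparing to the value of $\ex(n,T)$ supplied by Theorem \ref{T} picks out the graphs in the theorem: $T^{n/2}_n$ alone for $n \equiv 0 \pmod 4$ (where the matching construction strictly beats the star); $T^{\lceil n/2 \rceil}_n$ tied with $S^{\lceil n/2 \rceil}_n$ for $n \equiv 1,3 \pmod 4$; and an additional tie at $i = n/2+1$ contributing $T^{n/2+1}_n$ for $n \equiv 2 \pmod 4$. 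The main obstacle I anticipate is exactly this bipartite-completeness step, where the rigidity of $F_X,F_Y$ has to rule out every possible position for a new $T$-copy through $xy$; this is also precisely why $n = 5, 6$ must be excluded, because at least one of $|X|,|Y|$ then falls below the threshold $4$ used in the core structural claim, and sporadic extremal configurations can appear.
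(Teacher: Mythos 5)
Your verification that $T^i_n$ and $S^i_n$ are $T$-free is correct, and your ``core structural claim'' is a valid way to produce copies of $T$ \emph{once the bipartite graph between $X$ and $Y$ is known to be complete}: with $bc\in F_X$ and $d,f$ two $F_Y$-neighbours of $e\in Y$, all nine edges of the flattened tetrahedron are indeed accounted for. But that hypothesis is exactly where the argument breaks. Every structural conclusion you draw about $F_X$ and $F_Y$ (both matchings, or one empty and the other a star forest) is derived under the assumption $K_{X,Y}\subseteq G$, because the six cross edges of each intended $T$-copy are only ``automatically present'' in that case. You then propose to prove completeness of the bipartite part \emph{afterwards}, by ruling out a $T$-copy through a missing cross edge $xy$ ``based on the rigid matching-or-star structure of $F_X,F_Y$'' --- that is, based on the very structure you could only establish by assuming completeness. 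This is circular. A max-cut bipartition of an arbitrary extremal $T$-free graph gives no a priori control on the missing cross edges (the cut only guarantees each vertex has at least half its neighbours on the other side, which bounds $e(F_X)+e(F_Y)$ far too weakly), and without that control none of your embeddings of $T$ go through. Closing this loop would amount to proving a stability version of Theorem~\ref{T}, a substantially larger task than the sketch suggests. You also supply no base cases, yet your structural claim needs $\min(|X|,|Y|)\geq 4$, so small $n$ would have to be handled by hand in any event.

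The paper avoids the issue entirely by induction on $n$, mirroring the case analysis in the proof of Theorem~\ref{T}: equality in each of the inequalities (5)--(13) forces either a vertex $v$ of minimum degree, or an edge $uv$ with $d(u)+d(v)$ minimal, whose removal leaves an \emph{extremal} $T$-free graph on $n-1$ or $n-2$ vertices; the induction hypothesis then hands over the exact structure of $G-v$ (a $T^j_m$ or $S^j_m$), and one only has to check how $v$ (or $u,v$) can be attached without creating $T$. This reduces the whole problem to a one- or two-vertex attachment analysis plus explicit base cases at $n=4,6,7$, and is precisely why no bipartite-completeness lemma is ever needed. If you want to pursue your route, the honest statement of what is missing is a proof, independent of any completeness assumption, that an $n$-vertex $T$-free graph with $f_T(n)$ edges has a vertex bipartition with at most $O(1)$ (in fact zero) missing cross edges; as it stands your proposal assumes this rather than proving it.
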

These two results are known for sufficiently large $n's$ \cite{LIU}, here we are able to determine the value for small $n's$.

Using Theorems \ref{T} and \ref{ET}, we are able to prove the next two results for $P^2_6$.
\begin{theorem}\label{P6}
The maximum number of edges in an $n$-vertex $P^{2}_{6}$-free graph $(n\neq5)$ is:
\begin{displaymath} \ex(n,P^{2}_{6})=\left\{
\begin{aligned}
&\left\lfloor\frac{n^{2}}{4}\right\rfloor+\left\lfloor\frac{n-1}{2}\right\rfloor,~n\equiv1,2,3 ~(\bmod~6), \\
&\left\lfloor\frac{n^{2}}{4}\right\rfloor+\left\lceil\frac{n}{2}\right\rceil,~\otherwise.
\end{aligned}
\right.
\end{displaymath}
\end{theorem}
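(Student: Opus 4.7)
The plan is to prove Theorem~\ref{P6} in two parts: first exhibit a $P_6^2$-free graph realizing the claimed edge count, and then establish a matching upper bound via induction on $n$ using Theorems~\ref{T} and \ref{ET}. For the construction I would take $H = K_{a,n-a}$ with $a$ chosen close to $n/2$ and as close as possible to a multiple of $3$; inside the class $X$ of size $a$ I would add a vertex-disjoint union of triangles, together with a single extra matching edge if $a \equiv 2 \pmod 3$. The number of within-$X$ edges this construction carries equals $a$ when $3 \mid a$ and $a-1$ otherwise, so picking $a$ to maximize $a(n-a)$ plus this excess and splitting into residues of $n$ modulo $6$ yields exactly the values stated in the theorem.

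To check $P_6^2$-freeness of $H$, observe that $P_6^2$ has independence number $2$, so any embedding of $P_6^2$ places at most two of its six vertices in $Y$. Enumerating the six non-adjacent pairs of vertices in $P_6^2$ that could be sent to $Y$, the induced subgraph of $P_6^2$ on the complementary four vertices is in every case $P_4$, $K_3$ with a pendant edge, or $K_4-e$. None of these embeds into a disjoint union of triangles plus a single matching edge, because in each such four-vertex graph some vertex would have to have its two incident edges in two distinct triangle (or matching) components. Hence $H$ contains no copy of $P_6^2$.

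For the upper bound let $G$ be an extremal $P_6^2$-free graph on $n$ vertices; I would proceed by induction on $n$ after checking small base cases directly. Direct computation from the formula shows $\ex(n,P_6^2) - \ex(n-1,P_6^2) \in \{\lfloor n/2\rfloor,\ \lfloor n/2\rfloor+1\}$ depending on $n \pmod 6$, so it is enough to prove a correspondingly tight bound on $\delta(G)$ and then delete a minimum-degree vertex. Here Theorems~\ref{T} and \ref{ET} enter the picture: if $G$ happens to be $T$-free, then $e(G) \leq \ex(n,T)$ by Theorem~\ref{T}, and a direct comparison of $\ex(n,T)$ with the claimed $\ex(n,P_6^2)$ in each residue class of $n$ modulo $12 = \mathrm{lcm}(4,6)$ either yields the desired inequality immediately or identifies $G$ as an extremal $T$-free graph (of type $T^i_n$ or $S^i_n$), which I would then verify against the construction above.

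The hard part will be the remaining subcase, in which $G$ contains a copy of $T$. Here I would invoke the extremal description from Theorem~\ref{ET} together with the rigid structure of $P_6^2$ as a chain of four consecutive triangles, and argue that any vertex $v$ whose degree exceeds the admissible threshold must attach to the $T$-copy in a way that closes a $P_6^2$, contradicting the choice of $G$. Making this precise requires a finite but intricate case analysis on how such $v$ can intersect and extend the six vertices of the $T$-copy; controlling this interaction, and thereby forcing $\delta(G)$ to stay within the inductive slack, is the main obstacle of the proof.
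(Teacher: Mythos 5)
Your lower-bound construction and your treatment of the $T$-free case are sound and essentially match the paper: the paper also compares $\ex(n,T)$ with the target value residue class by residue class and, in the one bad case $n\equiv 8\pmod{12}$ where $\ex(n,T)$ exceeds the target by $1$, uses the uniqueness of the extremal $T$-free graph $T_n^{n/2}$ (Theorem \ref{ET}) together with $P_6^2\subseteq T_n^{n/2}$ to recover the bound. The independence-number argument for $P_6^2$-freeness of the construction is also fine (just note that the cases where fewer than two vertices land in $Y$ leave a connected $5$- or $6$-vertex induced subgraph in $X$, which trivially cannot embed).

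The genuine gap is the case $T\subseteq G$, which you yourself flag as "the main obstacle," and the strategy you propose for it does not work as described. You want to run the induction from $n-1$ to $n$ by bounding $\delta(G)$ by roughly $\lfloor n/2\rfloor$, and you propose to enforce this bound by analyzing how a high-degree vertex $v$ attaches to a fixed copy of $T$. But the constraint coming from a single $6$-vertex copy of $T$ is only that $v$ sends at most $3$ edges into it; this says nothing about the other $n-7$ potential neighbours of $v$, so no local case analysis at one $T$-copy can cap $d(v)$ at about $n/2$. What you would actually need is a global statement of the form "minimum degree $\geq \lfloor n/2\rfloor+2$ forces $P_6^2$," which is a separate nontrivial theorem (compare the paper's Lemmas \ref{1} and \ref{2}, which do exactly this kind of work for $T$ and are themselves not purely local). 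The paper sidesteps this entirely by running the induction from $n-6$ to $n$: once $T\subseteq G$, delete all six vertices of the $T$-copy, observe that the copy spans at most $\ex(6,P_6^2)=12$ edges and that each outside vertex sends at most $3$ edges into it, giving $e(G)\leq 12+3(n-6)+\ex(n-6,P_6^2)$, which closes the induction by arithmetic in each residue class mod $6$. If you want to salvage your outline, replacing the minimum-degree step by this $T$-deletion step is the missing idea; as written, the central case of the proof is not established.
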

\begin{figure}[b]
\centering
\begin{tikzpicture}

\foreach \x in {-2.1,-1.4,...,2.8}{
\foreach \y in {-2.1,-1.4}{

\filldraw (\x,0)  -- (\y,-2) ;

}
\foreach \y in {0.7,1.4,...,2.7}{
\filldraw (\x,0)  -- (\y,-2) ;

}
}

\foreach \x in {-2.1,-1.4,...,2.8}{

\filldraw (\x,0) circle (2pt);

}

\foreach \x in {-2.1,-1.4}{

\filldraw (\x,-2) circle (2pt);

}

\foreach \x in {0.7,1.4,...,2.7}{

\filldraw (\x,-2) circle (2pt);

}

%\filldraw (2.1,0) circle (2pt)  (2.1,-2) circle (2pt);

\draw (-2.1,0) -- (-1.4,.4) --  (-1.4,0)  (-1.05,0)  (-1.4,.4) --  (-1.75,0)  (-1.4,.4) --  (-.7,0)     (.35,.4) -- (0,0)  -- (.7,0)  -- (.35,.4)  (1.75,.4)-- (1.4,0) -- (2.1,0) --  (1.75,.4) (1.05,.2) node{$\cdots$} ;

\begin{small}
\filldraw (-1.05,0) node{$\cdots$}  (-1.4,.4) circle (2pt) (-1.75,0) circle (2pt) (1.75,.4) circle (2pt) (.35,.4) circle (2pt);
\end{small}
\draw (-.35,-2) node{$\dots$} ;

%\draw (.7,0) node{$\cdots$} (0.7,-2) node{$\cdots$} ;

\draw  (2.5,-2) arc (0:360:2.5cm and .5cm) node[align=center,right]{$Y$};

\draw (-3,0) node[right]{$i$};

\draw (-3.7,-2) node[right]{$n-i$};

\draw (2.5,-1) node[right]{$K_{i,n-i}$};

\draw (2.5,0) arc (0:360:2.5cm and .8cm) node[align=center,right]{$X$};

\draw (0,-3) node{$F^{i,j}_n$};

\end{tikzpicture}\qquad
\begin{tikzpicture}

\foreach \x in {-2.1,-1.4,...,2.1}{
\foreach \y in {-2.1,-1.4}{

\filldraw (\x,0)  -- (\y,-2) ;

}
\foreach \y in {0.7,1.4}{

\filldraw (\x,0)  -- (\y,-2) ;

}

}

\foreach \x in {-2.1,-1.4,...,2.1}{

\filldraw  (\x,0) circle (2pt);
}

\foreach \x in {-2.1,-1.4}{

\filldraw  (\x,-2) circle (2pt);
}

\foreach \x in {0.7,1.4}{

\filldraw  (\x,-2) circle (2pt);
}

%\filldraw (2.1,0) circle (2pt)  (2.1,-2) circle (2pt);

\draw (-1.75,.4) -- (-2.1,0) -- (-1.4,0) -- (-1.75,.4)   (-.35,.4) -- (-.7,0) -- (0,0) -- (-.35,.4) (1.05,.4) -- (.7,0) -- (1.4,0) -- (1.05,.4)  (-1.05,.2) node{$\cdots$}  (-.35,-2) node{$\cdots$}  ;
\filldraw (-1.75,.4) circle (2pt)  (-.35,.4) circle (2pt) (1.05,.4) circle (2pt);
%\filldraw (0.35,0) circle (2pt);

\draw  (1.8,-2) arc (0:360:2.15cm and .5cm) node[align=center,right]{$Y$};

\draw (-3.7,-2) node[right]{$n-i$};
\draw (-3,0) node[right]{$i$};
\draw (1.8,-1) node[right]{$K_{i,n-i}$};

\draw (1.8,0) arc (0:360:2.15cm and .8cm) node[align=center,right]{$X$};

\draw (-.35,-3) node{$H^{i}_{n}$};

\end{tikzpicture}
\caption{Graphs $F^{i,j}_{n}$ and $H^{i}_{n}$ }
\label{F^{i}_{n}}
\end{figure}
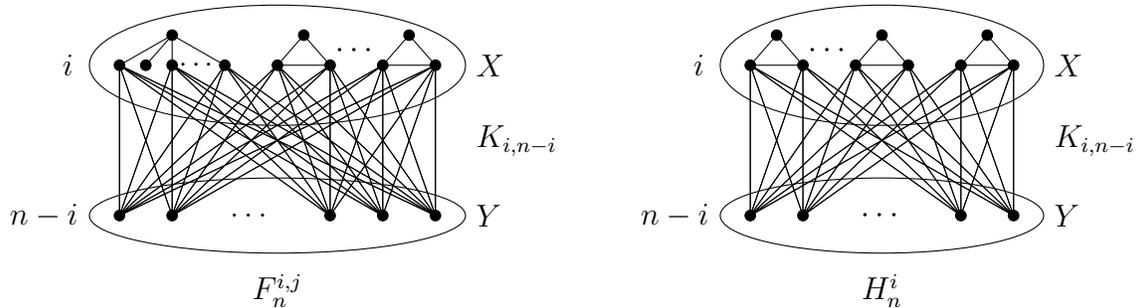
\begin{definition}
Suppose $3 \centernot\mid n$, and $1\leq j\leq i$. Let $F^{i,j}_{n}$ be the graph obtained by adding vertex disjoint triangles (possibly 0) and one star with $j$ vertices in the class $X$ of size $i$ of $K_{i,n-i}$, see Figure \ref{F^{i}_{n}} (Of course $3 \mid (i-j)$ is supposed). On the other hand if  $3 \mid i$ then add $\frac{i}{3}$ vertex disjoint triangles in the class $X$ of size $i$. The so obtained graph is denoted by $H^{i}_{n}$, see Figure \ref{F^{i}_{n}}.
\end{definition}
\begin{theorem}\label{EP6}
Let $n\geq 6$ be a natural number. The extremal graphs for $P_6^2$ are the following ones.

When $n\equiv 1 \pmod{6}$ then $F_n^{\lceil {\frac{n}{2}}\rceil, j}$ and $H_n^{\lfloor {\frac{n}{2}}\rfloor}$,

when $n\equiv 2 \pmod{6}$ then $F_n^{{\frac{n}{2}}, j}$ and $F_n^{{\frac{n}{2}}+1, j}$,

when $n\equiv 3 \pmod{6}$ then $F_n^{\lceil {\frac{n}{2}}\rceil, j}$ and $H_n^{\lceil {\frac{n}{2}}\rceil+1}$,

when $n\equiv 0, 4, 5 \pmod{6}$ then $H_n^{{\frac{n}{2}}}$, $H_n^{{\frac{n}{2}} +1}$ and $H_n^{\lceil {\frac{n}{2}}\rceil}$, respectively. $(j$ can have all the values satisfying the conditions $j\leq i$ and $3 \mid (i-j))$.
\end{theorem}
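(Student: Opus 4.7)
The plan is to verify first that every graph listed in Theorem~\ref{EP6} is $P_6^2$-free and achieves the edge count of Theorem~\ref{P6}, and then to establish that no other graph does. Direct counting gives $e(H_n^i) = i(n-i)+i$ (when $3\mid i$) and $e(F_n^{i,j}) = i(n-i)+(i-j)+(j-1) = i(n-i)+i-1$ (when $3\mid i-j$); plugging in the chosen $i$ for each residue of $n\pmod 6$ reproduces the formula of Theorem~\ref{P6}. For the $P_6^2$-freeness, the key observation is that $\alpha(P_6^2)=2$: any copy of $P_6^2$ embedded in one of these graphs must use at most two vertices from the independent side $Y$, hence at least four vertices from $X$. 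A direct check shows that the $4$-vertex induced subgraphs of $P_6^2$ are, up to isomorphism, $P_4$, $K_3$ with a pendant edge, and $K_4^-$. Since $G[X]$ in both $F_n^{i,j}$ and $H_n^i$ is a disjoint union of triangles and at most one star, none of these $4$-vertex configurations occur, so no copy of $P_6^2$ can exist.

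For the converse, let $G$ be a $P_6^2$-free graph on $n$ vertices with $e(G)=\ex(n,P_6^2)$. The first, and most delicate, step is to exhibit a partition $V(G)=X\cup Y$ with $|X|=i$ such that $G[X,Y]=K_{i,n-i}$ and $G[Y]$ has no edge. I plan to follow the strategy of the proof of Theorem~\ref{P6}: start with a maximum bipartition, then argue that any missing cross-edge between $X$ and $Y$, or any edge inside $Y$, can be removed and replaced by admissible edges inside $X$ while preserving $P_6^2$-freeness, contradicting extremality. The main obstacle in this step is making the ``repair'' operation strictly increase the edge count in the boundary cases, particularly when $|X|$ and $|Y|$ are close to equal; this requires a careful tracking of how the $X$-structure changes when a vertex is moved across the bipartition, and likely a separate treatment of small $n$ as base cases.

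Once the bipartition is fixed, the $\alpha(P_6^2)=2$ observation applies again: any four vertices of $X$ together with any two vertices of $Y$ yield a $P_6^2$ whenever the $4$-set induces in $G[X]$ one of $P_4$, $K_3$-with-pendant, or $K_4^-$. Forbidding these three subgraphs forces every component of $G[X]$ to be either $K_3$ or a star $K_{1,s}$ (with $s\ge 0$), from which $e(G[X])=i-m$, where $m$ is the number of non-triangular components. Extremality then forces $m\le 1$: if $m=0$ one recovers $H_n^i$ (needing $3\mid i$), and if $m=1$ one recovers $F_n^{i,j}$ (needing $3\mid i-j$). Finally, maximizing $i(n-i)+(i-m)$ subject to the divisibility constraint picks out a unique optimizing $i$ (or a small collection of tied optima) in each residue class of $n\pmod 6$, and the tie pattern---$H$ vs.\ $F$---comes precisely from whether the near-optimum $i\approx (n+1)/2$ is divisible by $3$. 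Matching against the four cases of the theorem is then mechanical.
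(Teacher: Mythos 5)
Your verification that the listed graphs are $P_6^2$-free and attain the bound is essentially sound, though the classification you state is off: the $4$-vertex induced subgraphs of $P_6^2$ also include $K_3\cup K_1$ and $2K_2$ (e.g.\ on $\{v_1,v_2,v_3,v_6\}$ and $\{v_1,v_2,v_5,v_6\}$), and a disjoint union of triangles certainly contains $K_3\cup K_1$, so the argument as written would refute your own constructions. What is true, and what you need, is that the two vertices of a copy mapped into the independent class $Y$ must form an independent pair of $P_6^2$, and every $4$-set complementary to such a pair spans $P_4$, $K_3$ plus a pendant edge, or $K_4^-$ --- each of which contains $P_4$ as a subgraph, while $G[X]$ (disjoint triangles plus one star) is $P_4$-free. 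That repair is routine.

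The converse direction, however, has a genuine gap at its central step. You assert that a maximum-bipartition-plus-repair (symmetrization) argument will produce the partition $V(G)=X\cup Y$ with a complete bipartite graph between the classes and $Y$ independent, and you attribute this strategy to the proof of Theorem \ref{P6}; but that proof contains no such argument --- it is an induction that deletes a copy of the flattened tetrahedron $T$ (or invokes $\ex(n,T)$ when $G$ is $T$-free). You yourself flag that the repair step may fail to strictly increase the edge count and may create new copies of $P_6^2$, and you do not resolve either difficulty; since everything downstream (the $P_4$-free analysis of $G[X]$ and the optimization over $i$) depends on first having this structure, the proof is incomplete exactly where it matters. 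The paper's actual route is quite different: when $T\nsubseteq G$ it compares $\ex(n,P_6^2)$ with $\ex(n,T)$ and reads off the possibilities from Theorem \ref{ET} and Proposition \ref{12}; when $T\subseteq G$ it shows that equality in the counting bound forces $T$ to span the $12$-edge graph $T'$, forces every outside vertex to be joined to exactly one of the triangles $\{b,c,e\}$ or $\{a,d,f\}$, and forces $G-T'$ to be extremal on $n-6$ vertices, so the structure follows by induction (with explicit forbidden configurations and base cases $n=6,\dots,11$). Some argument of comparable substance is needed in place of your sketched symmetrization before the remainder of your outline can be carried out.
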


On the basis of these results let us pose a conjecture for the general case.
\begin{conjecture}
$${\rm ex}(n,P_k^2)\leq \max \left\{\frac{ i\left( \left\lfloor {2k\over 3}\right\rfloor -2)\right)}{2} +i(n-i)\right\}.$$

If $\left\lfloor {2k\over 3}\right\rfloor -1$ divides $i$ then the following graph gives equality here. Take a complete bipartite graph with parts of size $i$ and $n-i$, add vertex disjoint complete graphs on $\left\lfloor {2k\over 3}\right\rfloor -1$ vertices to the part with $i$ elements.
\end{conjecture}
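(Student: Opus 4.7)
The plan splits into a lower bound (the construction achieves the right-hand side) and an upper bound (no $P_k^2$-free graph exceeds it), with the latter being the open content.

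\emph{Lower bound.} Take the prescribed graph $G$ on $V = X \sqcup Y$ with $|X|=i$, $|Y|=n-i$, all $X$--$Y$ edges present, and $G[X]$ a disjoint union of cliques $K_s$ where $s := \lfloor 2k/3 \rfloor - 1$ (assuming $s\mid i$); its edge count is $i(n-i)+\frac{i(s-1)}{2}$. To see $G$ is $P_k^2$-free, suppose $v_1,\dots,v_k$ spans a $P_k^2$. Each triple $v_{j-1}v_jv_{j+1}$ is a triangle in $G$, and since $Y$ is independent, neither $\{v_j,v_{j+1}\}$ nor $\{v_j,v_{j+2}\}$ can both lie in $Y$; hence in any three consecutive positions at most one vertex lies in $Y$, forcing at least $k - \lceil k/3\rceil = \lfloor 2k/3 \rfloor = s+1$ positions to lie in $X$. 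On the other hand, all $X$-vertices on the path belong to a single clique-block: consecutive $X$-vertices share an edge and hence a block, and a triple $v_{j-1}v_jv_{j+1}$ with $v_j\in Y$ and $v_{j-1},v_{j+1}\in X$ forces the edge $v_{j-1}v_{j+1}$, again placing them in one block. Transitivity collapses all $X$-vertices of the path into a block of size at most $s$, contradicting $s+1$.

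\emph{Upper bound.} I would proceed by induction on $n$, using Theorems~\ref{P5} and~\ref{EP6} (together with Mantel and Dirac) as anchors. Let $f(n,k)$ denote the conjectured maximum and $i^*=i^*(n)$ its optimizer. A short calculation on the quadratic $\frac{i(s-1)}{2}+i(n-i)$ (whose value at a fixed $i$ increases by $i$ when $n$ increments) yields $f(n,k)-f(n-1,k) \geq i^*(n-1)$; so for a hypothetical counterexample $G$ on $n$ vertices, any vertex $v$ with $d(v)\leq i^*(n-1)$ lets us delete $v$ and invoke induction. We may therefore assume $\delta(G) > i^*(n-1) \approx n/2$, and the problem reduces to showing that such a $G$ must contain $P_k^2$. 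My plan is to adapt P\'osa's rotation to the squared setting: extract a maximal $P_l^2$ subgraph with $l<k$, perform rotations at each end to generate many distinct candidate \emph{tail-edges} $(v_{l-1}^*,v_l^*)$, and then use the degree condition to produce a vertex outside the path adjacent to both endpoints of some such edge, yielding a $P_{l+1}^2$ and contradicting maximality.

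The decisive obstacle is this rotation step. For ordinary paths one rotates a single endpoint, but extending $P_l^2$ to $P_{l+1}^2$ requires a vertex dominating an entire tail \emph{edge}, so the rotated object must be an edge rather than a vertex. This is precisely the technical core of the P\'osa--Seymour problem (resolved asymptotically by Koml\'os, S\'ark\"ozy and Szemer\'edi via the Regularity Lemma), and it is not clear how to extract a \emph{sharp} extremal count from such machinery. A further complication, already visible for $k=6$ in the multiplicity of extremal structures ($F^{i,j}_n$ and $H^i_n$ in Theorem~\ref{EP6}), is that near-extremal configurations are abundant; an exact proof would need to be accompanied by a stability step classifying them all before the inductive deletion can be made tight.
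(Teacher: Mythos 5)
This statement is posed in the paper as a \emph{conjecture}: the paper contains no proof of the inequality, only supporting evidence, so there is no ``paper proof'' to match your attempt against. What the paper does offer is Lemma \ref{open} (adding a path on $r$ vertices to one side of $K_{n,n}$ forces a $P^2_{\lfloor 3r/2\rfloor+1}$, so a $P_k^2$-free graph of this shape cannot carry a path on $\lfloor 2k/3\rfloor$ vertices in one class), the Erd\H{o}s--Gallai and Faudree--Schelp/Kopylov results identifying disjoint cliques $K_{\lfloor 2k/3\rfloor-1}$ as the extremal $P_{\lfloor 2k/3\rfloor}$-free configurations, and the observation that the conjecture is consistent with the solved cases $k=3,4,5,6$. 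Your lower-bound half is a correct and complete argument that the paper does not actually write out: you verify directly that $K_{i,n-i}$ plus disjoint copies of $K_{s}$ ($s=\lfloor 2k/3\rfloor-1$) in the $i$-class is $P_k^2$-free, by showing that any $P_k^2$ would have at most $\lceil k/3\rceil$ vertices in the independent class (no two of its vertices within distance two on the path), hence at least $s+1$ in the clique class, all of which are forced into a single clique block. This is the converse direction of Lemma \ref{open} and is a genuine, if modest, addition.

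The upper bound, however, is the entire content of the conjecture, and your proposal does not establish it --- as you acknowledge. The minimum-degree reduction ($f(n,k)-f(n-1,k)\geq i^*(n-1)$, so a minimal counterexample has $\delta(G)>i^*(n-1)$) is sound but only reduces the problem; the proposed P\'osa-type rotation of a terminal \emph{edge} of a maximal $P_l^2$ is exactly the step that is not known how to carry out sharply (it is the combinatorial core of the P\'osa--Seymour circle of problems), and you rightly flag it as the obstruction rather than claiming it works. So the honest summary is: you have proved the second sentence of the conjecture (the construction attains the claimed value), while the first sentence (the inequality) remains open in your write-up exactly as it does in the paper. Were this to be spliced into the paper, the freeness verification could legitimately accompany Lemma \ref{open}; the upper-bound discussion should be presented as a proof strategy, not a proof.
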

Observe that Theorems 1, 2, 3 and 7 justify our conjecture for the cases when $k=3,4,5,6.$ We will give some hints in Section 3 how we arrived to this conjecture. A weaker form of this conjecture is the following one.
\begin{conjecture}
$${\rm ex}(n,P_k^2)= {n^2\over 4} + {\left( \left\lfloor {2k\over 3}\right\rfloor -2\right)n \over 2 }+o(1)$$
where $o(1)$ depends only on $k$.
\end{conjecture}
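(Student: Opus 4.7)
My plan is to prove Conjecture~2 in two steps: a construction giving the lower bound, and a stability-based argument for the matching upper bound.

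For the lower bound I would verify the construction implicit in Conjecture~1: form $K_{i,n-i}$ with $i\approx n/2$ and partition the part $X$ of size $i$ into vertex-disjoint copies of $K_{\lfloor 2k/3\rfloor -1}$. Every triple $v_{j-1}v_jv_{j+1}$ of consecutive vertices on a hypothetical copy of $P_k^2$ must be a triangle of this graph, and the only triangles present have at least two vertices of $X$ lying in a common clique. Following this constraint along the path forces (i) all the $X$-vertices of the path to lie inside a single $K_{\lfloor 2k/3\rfloor -1}$, since consecutive $X$-vertices and $X$-vertices at path-distance two are joined by edges and hence belong to a common clique, and (ii) the indices of the $Y$-vertices to differ pairwise by at least $3$. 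Hence the path covers at most $(\lfloor 2k/3\rfloor -1)+\lceil k/3\rceil <k$ vertices, a contradiction. Counting the edges of the construction and optimizing over $i$ gives the main term $n^2/4 + (\lfloor 2k/3\rfloor -2)n/4 + O(1)$, in agreement with the exact values in Theorems~\ref{P5} and~\ref{P6} (which suggests that the coefficient in the conjecture as written should be $(\lfloor 2k/3\rfloor -2)/4$ rather than the stated $(\lfloor 2k/3\rfloor -2)/2$).

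For the upper bound I would start from $\ex(n,P_k^2)=n^2/4+o(n^2)$ given by Erd\H{o}s--Stone--Simonovits and sharpen it via the Erd\H{o}s--Simonovits stability theorem: any near-extremal $P_k^2$-free graph $G$ admits a max cut $(A,B)$ with $||A|-|B|| = o(n)$ and only $o(n^2)$ non-crossing edges. A cleaning step that removes vertices of small cross-degree then reduces the problem to a graph which is essentially a complete bipartite graph $K_{a,b}$ with $a,b\approx n/2$ augmented by small auxiliary graphs $F_A, F_B$ inside the parts; since the crossing part contributes at most $\lfloor n^2/4\rfloor$ edges, it suffices to bound $e(F_A)+e(F_B)$.

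The heart of the proof, and the main obstacle, is the following auxiliary extremal statement: if a near-complete bipartite graph on parts of near-equal size is augmented by an arbitrary graph $F$ on one part $A$, and the combined graph avoids $P_k^2$, then $e(F)\leq (\lfloor 2k/3\rfloor -2)|A|/2 + o(|A|)$, with extremal $F$'s being disjoint unions of $K_{\lfloor 2k/3\rfloor -1}$. I would attempt induction on $|A|$: pick $v\in A$ and let $K\subseteq F$ be a maximal clique through $v$; the zig-zag embedding from the lower-bound analysis shows that $|K|\geq \lfloor 2k/3\rfloor$ already produces a copy of $P_k^2$, so $|K|\leq \lfloor 2k/3\rfloor -1$, and one then peels off $K$ and applies induction to $A\setminus K$. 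The subtle point is that $F$ need not be a disjoint union of cliques even near the extremum, so at each inductive step the edges between $K$ and $A\setminus K$ must be absorbed by a careful case analysis reminiscent of, but markedly more involved than, the proofs of Theorems~\ref{P5} and~\ref{EP6}. Handling this uniformly for all $k$ is the central technical challenge.
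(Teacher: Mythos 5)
This statement is Conjecture 2 of the paper: it is posed as an open problem, and the paper contains no proof of it, only the heuristic discussion in Section 3 (Lemma \ref{open} together with the Erd\H{o}s--Gallai and Faudree--Schelp--Kopylov theorems). So there is no proof to compare yours against; I can only assess your proposal on its own terms, and as written it is a programme with genuine gaps rather than a proof. On the positive side, your lower-bound analysis is sound and coincides with the paper's own reasoning: the zig-zag embedding you describe is exactly Lemma \ref{open}, and your verification that the clique-decorated bipartite graph is $P_k^2$-free (all $X$-vertices of a putative copy forced into a single clique, $Y$-vertices pairwise at path-distance at least $3$) is correct. Your observation that the optimized edge count is $n^2/4+(\lfloor 2k/3\rfloor-2)n/4+O(1)$, so that the displayed coefficient in Conjecture 2 is inconsistent with Conjecture 1 and with Theorems \ref{P5} and \ref{P6}, is also correct and worth recording.

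The upper bound, however, does not go through as sketched. First, Erd\H{o}s--Simonovits stability only places $G$ within $o(n^2)$ edits of a balanced complete bipartite graph; a result with an $O(n)$ (let alone $o(1)$) error term requires converting that $o(n^2)$ precision into exact structural control, which is precisely the hard part of such theorems and is not supplied by ``a cleaning step.'' Second, and more concretely, your reduction is to bounding $e(F_A)+e(F_B)$, but your auxiliary lemma is one-sided: applied to each part of size roughly $n/2$ it yields $e(F_A)+e(F_B)\le(\lfloor 2k/3\rfloor-2)n/2+o(n)$, which is twice the required bound. The conjectured extremal graphs put all extra edges on one side, so you need a two-sided statement quantifying the incompatibility of edges inside $A$ with edges inside $B$ (this is exactly the role the flattened tetrahedron $T$ plays in the paper's proof for $k=6$), and nothing in your sketch supplies it. Finally, the clique-peeling induction you propose for the one-sided lemma is both harder than necessary and incomplete (maximal cliques do not partition $F$, and the cross edges are exactly what you cannot absorb): the intended route, which the paper spells out in Section 3, is that Lemma \ref{open} forbids a path on $\lfloor 2k/3\rfloor$ vertices inside one part, whereupon the Erd\H{o}s--Gallai theorem gives $e(F)\le(\lfloor 2k/3\rfloor-2)|A|/2$ directly. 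Even with that repair, the two obstacles above remain, so the conjecture stays open.
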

\section{Proofs of the main results}
\subsection{The Tur\'an number and the extremal graphs for $P^2_5$}
\begin{proof}[Proof of Theorem \ref{P5}]
The fact that $\ex(n, P^{2}_{5})\geq\left\lfloor \frac{n^2+n}{4}\right\rfloor$ follows from the construction $E^{\left\lceil\frac{n}{2}\right\rceil}_n$.\\
We prove the inequality
\begin{eqnarray}\label{P_5}
\aligned
\ex(n, P^{2}_{5})\leq \left\lfloor\frac{n^{2}+n}{4}\right\rfloor ~(n\geq5)
\endaligned
\end{eqnarray}
by induction on $n$.

We check the base cases first. Since our induction step will go from $n-4$ to $n$, we have to find a base case in each residue class $\bmod~4$.

Let $G$ be an $n$-vertex $P^{2}_{5}$-free graph.
When $n\leq3$, $K_n$ is the graph with the most number of edges and does not contain $P^{2}_5$, $e(K_n)\leq \left\lfloor\frac{n^{2}+n}{4}\right\rfloor$. This settles the cases $n=1,2,3$. However, when $n=4$, $e(K_{4})=6>\lfloor\frac{4^{2}+4}{4}\rfloor$, the statement is not true. Then we show that the statement is true for $n=8$. If $P^2_4 \nsubseteq G$, $e(G)\leq\lfloor\frac{8^{2}}{4}\rfloor$. If $P^2_4\subseteq G$ and $K_4\nsubseteq G$, each vertex $v\in V(G-P^2_4)$ can be adjacent to at most 2 vertices of the copy of $P^2_4$, since $e(G-P^2_4)\leq5$, we have $e(G)\leq5+8+5\leq18=\lfloor\frac{8^{2}+8}{4}\rfloor$. If $K_4\subseteq G$, then each vertex $v\in V(G-K_4)$ can be adjacent to at most one vertex of the $K_4$, since $e(G-P^2_4)\leq6$, we have $e(G)\leq16$.

 Suppose (\ref{P_5}) holds for all $k\leq n-1$, the proof is divided into 3 parts,

 \textbf{Case 1}.\ \ If $P^{2}_{4}\nsubseteq G$, then by Theorem \ref{DIR}, $e(G)\leq \lfloor \frac{n^{2}}{4}\rfloor$.

\textbf{Case 2}.\ \ If $P^{2}_{4}\subseteq G$ and $K_{4}\nsubseteq G$, then each vertex $v\in V(G-P^{2}_{4})$ can be adjacent to at most 2 vertices of the copy of $P^{2}_{4}$, otherwise, $P^{2}_{5}\subseteq G$. Since $G-P^{2}_{4}$ is an $(n-4)$-vertex $P^{2}_{5}$-free graph, we have\\
\centerline{$e(G)\leq 5+2(n-4)+e(G-P^{2}_{4})\leq2n-3+\ex(n-4,P^{2}_{5})$.}
By the induction hypothesis, $\ex(n-4, P^{2}_{5})\leq \left\lfloor\frac{(n-4)^{2}+n-4}{4}\right\rfloor$  then
\begin{eqnarray}
e(G)\leq 2n-3+\ex(n-4,P^{2}_{5})\leq 2n-3+\left\lfloor\frac{(n-4)^{2}+n-4}{4}\right\rfloor
 =\left\lfloor\frac{n^{2}+n}{4}\right\rfloor~(n\geq5).\end{eqnarray}
 
\textbf{Case 3}.\ \ If $K_{4}\subseteq G$, then each vertex $v\in V(G-K_{4})$ can be adjacent to at most one vertex of the $K_{4}$, otherwise, $P^{2}_{5}\subseteq G$. Since $G-K_{4}$ is an $(n-4)$-vertex $P^{2}_{5}$-free graph, we have\\
\centerline{$e(G)\leq 6+(n-4)+e(G-K_{4})\leq n+2+\ex(n-4,P^{2}_{5})$.}
By the induction hypothesis, $\ex(n-4, P^{2}_{5})\leq \left\lfloor\frac{(n-4)^{2}+n-4}{4}\right\rfloor$, thus
\begin{eqnarray}e(G)\leq n+2+\left\lfloor\frac{(n-4)^{2}+n-4}{4}\right\rfloor
 =5+\left\lfloor\frac{n^{2}-3n}{4}\right\rfloor\leq\left\lfloor\frac{n^{2}+n}{4}\right\rfloor~(n\geq5).\end{eqnarray}\qedhere\end{proof}
\begin{proof}[Proof of Theorem \ref{EP5}]
We determine the extremal graphs for $P^{2}_{5}$ by induction on $n$. Let $G$ be an $n$-vertex $P^2_5$-free graph satisfying (1) with equality. It is easy to check, when $n=5$, the
extremal graphs for $P^2_{5}$ are $G_0$, $E^2_{5}$ and $E^3_{5}$. When $n=6,7,8$,  the extremal graphs for $P^2_{5}$ are $E^3_{6}$ and $E^4_{6}$, $E^4_{7}$, $E^4_{8}$, respectively.

Suppose Theorem \ref{EP5} is true for $k\leq n-1$, when $n\geq9$, the proof is divided into 3 parts.

 \textbf{Case 1}.\ \ If $P^{2}_{4}\nsubseteq G$, the equality in (1) cannot hold, then we cannot find any extremal graph for $P^2_5$ in this case.
 
 \textbf{Case 2}.\ \ If $P^{2}_{4}\subseteq G$ and $K_{4}\nsubseteq G$,
 the equality holds in inequality (2) if and only if each vertex $v\in V(G-P^{2}_{4})$ is adjacent to 2 vertices of the $P^{2}_{4}$ and $G-P^{2}_{4}$ is an extremal graph on $n-4$ vertices for $P^2_5$.
Let $a,~b,~c$ and $d$ be four vertices of a copy of $P^{2}_{4}$, $d_{P^{2}_{4}}(b)=d_{P^{2}_{4}}(c)=3$. 
By the induction hypothesis, $G-P^{2}_{4}$ is obtained from a complete bipartite graph $K_{i,n-4-i}$ plus a maximum matching in $X^{'}$, where $X^{'}$ is the class of $G-P^{2}_{4}$ with size $i$. 
It is easy to check that every vertex $v\in V(G-P^{2}_{4})$ can be adjacent to either $a$ and $d$ or $b$ and $c$. 

Since $|V(G-P^{2}_{4})|\geq5$, we have $|V(X{'})|\geq2$. The endpoints of an edge in $G-P^2_4$ cannot be both adjacent to $b$ and $c$, otherwise, they form a $K_4$. Also, the endpoints of an edge in $G-P^2_4$ which have one end vertex as a matched vertex in $X{'}$ and one end vertex in $Y^{'}$ can be both adjacent to none of $\{a,b,c\}$ and $d$, otherwise, these would create a $P^2_5$. If there exists a matched vertex $v\in X{'}$ which is adjacent to $b$ and $c$, then all vertices $w\in N(v)$ should be adjacent to $a$ and $d$, these form a $P^2_5$. Hence, it is only possible that all matched vertices in $X^{'}$ are adjacent to both $a$ and $d$, all vertices in $Y^{'}$ are adjacent to $b$ and $c$. When there exists an unmatched vertex $v_{0}\in X^{'}$, since $N(v_{0})=Y^{'}$, if $v_{0}$ is adjacent to $b$ and $c$, we have $P^{2}_{5}\subseteq G$. Thus $G$ is obtained from a complete bipartite graph $K_{i+2,n-i-2}$ plus a maximum matching in $X$, where $X=X^{'}\cup\{b,c\}$ and $Y=Y^{'}\cup a\cup d$. Therefore, if $G-P^2_4$ is $E^{\lceil\frac{n-4}{2}\rceil}_{n-4}$ then $G$ is $E^{\lceil\frac{n}{2}\rceil}_n$, if $E^{\lfloor\frac{n-4}{2}\rfloor}_{n-4}$ then $G$ is $E^{\lfloor\frac{n}{2}\rfloor}_n$.

 \textbf{Case 3}.\ \ If $K_{4}\subseteq G$, the inequality in (3) can be equality only when $n=5$ and the vertex $v\in~V(G-K_{4})$ is adjacent to one vertex of the $K_{4}$, that is $G_0$.
 \end{proof}
 \subsection{The Tur\'an number and the extremal graphs for $T$}
To prove Theorem \ref{T}, we need the following lemmas.
\begin{lemma}\label{1}
Let $G$ be an $n$-vertex $T$-free nonempty graph such that for each edge $\{x,y\}\in E(G)$, $d(x)+d(y)\geq n+2$ holds, then we have $K_{4}\subseteq G$.
\end{lemma}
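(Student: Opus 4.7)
The plan is to fix any edge $\{x,y\}$ and funnel the hypothesis into a direct production of either $K_4$ or two vertex-disjoint triangles (a copy of $T=2K_3$), the latter contradicting $T$-freeness. The basic engine is that the edge condition $d(x)+d(y)\geq n+2$, combined with $|N(x)\cup N(y)|\leq n$, forces at least two common neighbors, so every edge lies in at least two triangles; this can be iterated along adjacent edges.

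Starting from $\{x,y\}$, set $C:=N(x)\cap N(y)$, so $|C|\geq 2$. If any two $u,v\in C$ are adjacent, then $\{x,y,u,v\}$ is already a $K_4$ and we are done, so I may assume $C$ is independent and fix two distinct $u,v\in C$. Applying the degree hypothesis to $\{x,u\}$ produces a second common neighbor $w\neq y$; this $w$ cannot lie in $C$, because $w\in C\cap N(u)$ would give $\{x,y,u,w\}=K_4$ (violating the current assumption or closing the argument). Thus $w\in N(x)\setminus(N(y)\cup\{y\})$. The symmetric argument on $\{y,u\}$ yields $b\in N(y)\cap N(u)\setminus(N(x)\cup\{x\})$.

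The key dichotomy is whether $wb\in E$. If it is, then $\{u,w,b\}$ is a triangle, and it is vertex-disjoint from $\{x,y,v\}$ (since $w\notin N(y)$ forbids $w\in C$, in particular $w\neq v$, and similarly for $b$), so $T\subseteq G$, contradiction. Hence $wb\notin E$. Applying the degree hypothesis to $\{u,w\}$ then produces a common neighbor $s\neq x$; now $s\neq y$ because $wy\notin E$, $s\neq b$ because $wb\notin E$, and $s\notin C$ because $s\in N(u)$ with $u\in C$ would force $\{x,y,u,s\}=K_4$. Thus $s\notin\{x,y,v\}$, so $\{u,w,s\}$ and $\{x,y,v\}$ are two vertex-disjoint triangles, again contradicting $T$-freeness.

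The main obstacle is the bookkeeping in the last step: at each stage one must verify that every candidate common neighbor manufactured by the degree condition either lands in $C$ and produces a $K_4$ (which is what we want), or is safely outside the vertex set of the companion triangle $\{x,y,v\}$. Choosing $v\in C$ distinct from $u$ at the very beginning is what makes this ``companion triangle'' available, and independence of $C$ is what keeps the various candidate neighbors from secretly coinciding with one another.
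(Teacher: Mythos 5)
Your argument collapses at its final step because you have misidentified the forbidden graph: $T$ is not $2K_3$. The flattened tetrahedron $T$ is the $9$-edge graph on six vertices consisting of a central triangle together with three further vertices, each joined to a distinct pair of vertices of that triangle. (This is forced by the rest of the paper even if the figure is drawn ambiguously: the extremal construction $T^{\lceil n/2\rceil}_{n}$, a complete bipartite graph plus a matching in each class, visibly contains two vertex-disjoint triangles once $n\geq 8$, so it could not be $2K_3$-free; and in the proof of Theorem 7 the graph $T'$ obtained from $T$ by adding the three edges $\{a,e\},\{c,d\},\{b,f\}$ is stated to have $12$ edges, so $T$ has $9$ edges, not $6$.) Consequently, exhibiting two vertex-disjoint triangles in $G$ is \emph{not} a contradiction with $T$-freeness, and both terminal branches of your case analysis (the case $wb\in E$ and the case $wb\notin E$) establish nothing.

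The irony is that you had already won before the dichotomy on $wb$: the configuration you assemble --- the triangle $\{x,y,u\}$ together with $v$ adjacent to $x$ and $y$, $w$ adjacent to $x$ and $u$, and $b$ adjacent to $y$ and $u$ --- is precisely a copy of the true $T$, with $\{x,y,u\}$ as the central triangle, and your own bookkeeping already shows the six vertices are distinct ($v\in C$ while $w,b\notin C$, and $w\notin N(y)$ while $b\in N(y)$, so $v,w,b$ are pairwise distinct and none of them lies in $\{x,y,u\}$). So the repair is simply to delete the last third of the proof and stop there. With that change your argument becomes essentially the paper's: the paper likewise uses $d(x)+d(y)\geq n+2$ to place every edge in at least two triangles, starts from two triangles sharing an edge, and shows that the only way to avoid completing a $K_4$ is to build exactly this nine-edge configuration, which is the forbidden $T$.
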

\begin{proof}
From the condition we know that each edge belongs to at least two triangles. Let $abc$ and $bcd$ be two triangles, if $a$ is adjacent to $d$ then $a,b,c$ and $d$ induce a $K_{4}$, if not, since edge $\{b,d\}$ is contained in at least two triangles, there exists at least one vertex $e$ such that $bde$ is a triangle. Similarly, edge $\{c,d\}$ is also contained in at least two triangles, then,
either there exists a vertex $f$ which is adjacent to $c$ and $d$, this implies that vertices $a,b,c,d,e$ and $f$ induce a $T$, or $c$ is adjacent to $e$, this implies that vertices $b,c,d$ and $e$ induce a $K_{4}$.
\end{proof}
\begin{lemma}\label{2}
Let $G$ be an $n$-vertex $(n\geq7)$ $T$-free graph and $K_{4}\subseteq G$, then $e(G)\leq2n-2+ex(n-4,T)$. For $n\geq8$, the equality might hold only if each vertex $v\in V(G-K_{4})$ is adjacent to 2 vertices of the $K_{4}$.
\end{lemma}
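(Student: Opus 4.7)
Let $S=\{v_1,v_2,v_3,v_4\}$ be the vertex set of a copy of $K_4$ inside $G$. The plan is to decompose the edges of $G$ as
\[
e(G)=e(G[S])+e(S,V\setminus S)+e(G[V\setminus S])
\]
and bound each piece. Two of the three bounds are immediate: $e(G[S])=6$ because $|S|=4$ and $G[S]\supseteq K_4$, and $e(G[V\setminus S])\leq \ex(n-4,T)$ because the induced subgraph $G[V\setminus S]$ is $T$-free. What remains is the boundary estimate
\[
e(S,V\setminus S)\leq 2(n-4),
\]
which, since $|V\setminus S|=n-4$, is equivalent to the claim that every $u\in V\setminus S$ has at most two neighbors in $S$.

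I would prove the key claim by contradiction. Suppose some $u\in V\setminus S$ has $|N(u)\cap S|\geq 3$; then $u$ together with three of its neighbors in $S$ spans a second $K_4$ overlapping $S$ in three vertices, producing a very dense substructure (indeed $\{u\}\cup S$ induces a $K_5$ when $|N(u)\cap S|=4$). Combining this dense configuration with any further vertex $u'\in V\setminus S$ having $|N(u')\cap S|\geq 2$ yields vertex-disjoint triangles and more elaborate copies of subgraphs of $T$: for example, if $u$ is adjacent to all four $v_i$ and $u'$ is adjacent to $v_i,v_j$, then $u'v_iv_j$ and $uv_kv_\ell$ (with $\{k,\ell\}=\{1,2,3,4\}\setminus\{i,j\}$) are vertex-disjoint triangles in $G$, which together with the other edges of the configuration deliver a copy of $T$ and contradict $T$-freeness. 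An analogous but more delicate argument handles the subcase $|N(u)\cap S|=3$, where similar pairs of disjoint triangles using $u$ and a second vertex $u'$ with two neighbors in $S$ again produce $T$; in the process one also shows that if such a $u$ exists then the other vertices of $V\setminus S$ can contribute only very few edges to $S$, so for $n\geq 8$ the boundary estimate actually holds with strict inequality.

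Combining the three bounds gives $e(G)\leq 6+2(n-4)+\ex(n-4,T)=2n-2+\ex(n-4,T)$, as required. For the equality statement in the case $n\geq 8$, equality in the chain forces the boundary estimate to be tight, i.e., $e(S,V\setminus S)=2(n-4)$; since each $u\in V\setminus S$ contributes at most $2$ to this sum, and any vertex contributing $3$ or $4$ would force the strict inequality discussed above, equality forces every contribution to be exactly $2$, so every $u\in V\setminus S$ is adjacent to exactly two vertices of $S$. The principal technical obstacle is the case analysis for $|N(u)\cap S|\geq 3$: one must carefully combine the two overlapping copies of $K_4$ with the adjacencies of a second vertex $u'$ to exhibit an explicit copy of $T$, and the subcase $|N(u)\cap S|=3$ is especially delicate because the second $K_4$ on $\{u,v_1,v_2,v_3\}$ misses one vertex of $S$, so additional adjacency constraints on the remaining vertices of $V\setminus S$ must be tracked before a copy of $T$ (or the desired strict inequality) can be produced.
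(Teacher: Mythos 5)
Your overall decomposition $e(G)=e(G[S])+e(S,V\setminus S)+e(G[V\setminus S])$ and the final accounting match the paper's proof, but the ``key claim'' you set out to prove --- that \emph{every} $u\in V\setminus S$ has at most two neighbours in $S$ --- is false, and your own argument cannot establish it. A single vertex adjacent to three or even all four vertices of $S$ creates no copy of $T$ by itself: $K_5$ is $T$-free (since $T$ has six vertices), so for example $K_5$ together with isolated vertices is a $T$-free graph containing a $K_4$ one of whose outside vertices has four neighbours in $S$. Your contradiction only fires once a \emph{second} vertex $u'$ with at least two neighbours in $S$ is present; if no such $u'$ exists, there is no contradiction, so the per-vertex bound cannot be deduced. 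What your argument actually proves (and what the paper proves) is a dichotomy: either every vertex of $V\setminus S$ has at most two neighbours in $S$, giving $e(S,V\setminus S)\leq 2(n-4)$; or some vertex has at least three, in which case every \emph{other} outside vertex has at most one, giving $e(S,V\setminus S)\leq 4+(n-5)=n-1$, which is at most $2(n-4)$ for $n\geq 7$ and strictly less for $n\geq 8$. You do reach this correct fallback in your closing sentences (``the boundary estimate actually holds with strict inequality''), but the write-up needs to be restructured around the dichotomy rather than around a false per-vertex claim proved ``by contradiction.''

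Two smaller points. First, your route to a copy of $T$ via ``vertex-disjoint triangles'' is misleading: $T$ does not contain two vertex-disjoint triangles (its three outer vertices $a,d,f$ are pairwise non-adjacent). The correct way to exhibit $T$ in the configuration $S\cup\{u,u'\}$ is to choose a central triangle inside $S\cup\{u\}$ and assign $u$ or $u'$ and the remaining vertices of $S$ to the three distinct pairs of that triangle; this works in every subcase (whether $u'$'s two neighbours lie inside $u$'s neighbourhood in $S$ or include the fourth vertex) and is the step the paper compresses into ``it is simple to check.'' Second, once the dichotomy is in place, your equality analysis for $n\geq 8$ is fine: the second branch gives strict inequality, so equality forces the first branch with every outside vertex having exactly two neighbours in $S$.
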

\begin{proof}
If there exists vertex $v\in V(G-K_4)$, such that $v$ is adjacent to at least 3 vertices of the $K_{4}$, it is simple to check that every other vertex $u\in V(G-K_4)$ can be adjacent to at most one vertex of the $K_{4}$, otherwise $T\subseteq G$, then $e(G)\leq6+4+(n-5)+e(G-K_{4})\leq n+5+ex(n-4,T)$. If not, each vertex in $G-K_{4}$ is adjacent to
at most 2 vertices of the $K_{4}$, then $e(G)\leq6+2(n-4)+e(G-K_{4})\leq 2n-2+ex(n-4,T)$. When $n\geq8$, $e(G)\leq 2n-2+ex(n-4,T)$, the equality holds only if each vertex $v\in V(G-K_{4})$ is adjacent to 2 vertices of the $K_{4}$.
\end{proof}
\begin{proof}[Proof of Theorem \ref{T}]
Let $$f_{T}(n)=\left\{
\begin{aligned}
&\left\lfloor\frac{n^{2}}{4}\right\rfloor+\left\lfloor\frac{n}{2}\right\rfloor,~n\not\equiv2~(\bmod~4), \\
&\frac{n^{2}}{4}+\frac{n}{2}-1,~n\equiv2~(\bmod~4).
\end{aligned}
\right.$$

The fact that $\ex(n,T)\geq f_{T}(n)$ follows from the construction $T^{\left\lceil\frac{n}{2}\right\rceil}_{n}$. Next, we show the inequality
\begin{eqnarray} \ex(n,T)\leq f_{T}(n)
\end{eqnarray}
by induction on $n$. 

Let $G$ be an $n$-vertex $T$-free graph.
first, we show the induction steps, in the end we will show the base cases which are needed to complete the induction.

Suppose $(4)$ holds for all $l\leq n-1$, in the following cases, we will assume that $k\geq2$, the proof is divided into 4 cases.

\textbf{Case 1}.\ \ When $n=4k$, we divide the proof of $\ex(4k,T)\leq f_{T}(4k)=4k^{2}+2k$ into 2 subcases. Let $G$ be a $4k$-vertex $T$-free graph.\\
$(i)$\ If $\delta (G)\leq2k+1$, after removing a vertex of minimum degree and by the induction hypothesis $\ex(4k-1,T)=4k^{2}-1$, we get \vspace{-0.5em}\begin{eqnarray}e(G)\leq \ex(4k-1,T)+2k+1\leq4k^{2}-1+2k+1=f_{T}(4k).\end{eqnarray}
$(ii)$\  If $\delta (G)\geq2k+2$, then for each edge $\{u,v\}\in E(G)$, $d(u)+d(v)\geq4k+4$. By Lemmas \ref{1} and \ref{2} and the induction hypothesis $\ex(4k-4,T)=4(k-1)^2+2(k-1)$, we get \vspace{-0.5em}\begin{eqnarray}e(G)\leq2n-2+\ex(4k-4,T)=8k-2+4(k-1)^2+2(k-1)=f_{T}(4k).\end{eqnarray}
Therefore, $\ex(4k,T)\leq f_{T}(4k)$.

\textbf{Case 2}.\ \ When $n=4k+1$, we divide the proof of $\ex(4k+1,T)\leq f_{T}(4k+1)=4k^{2}+4k$ into 3 subcases. Let $G$ be a $(4k + 1)$-vertex $T$-free graph.\\
\textbf{($i$)}\ If $\delta (G)\leq2k$, after removing a vertex of minimum degree and by the induction hypothesis $\ex(4k,T)=4k^{2}+2k$, we have \vspace{-0.5em}\begin{eqnarray}e(G)\leq \ex(4k,T)+2k\leq f_{T}(4k+1).\end{eqnarray}
Now, we assume that in the following two cases $\delta(G)\geq2k+1$. Then for any pair of vertices $\{u,v\}\in E(G)$, $d(u)+d(v)\geq4k+2$ holds.\\
 \textbf{($ii$)}\  Suppose that there exists an edge $\{u,v\}\in E(G)$, such that $d(u)+d(v)=4k+2$. This implies that $u$ and $v$ have at least one common neighbor. Deleting $\{u,v\}$ we can use the induction hypothesis $\ex(4k-1,T)=4k^2-1$. Then
 \vspace{-0.5em}\begin{eqnarray}e(G)\leq4k+1+\ex(4k-1,T)=f_{T}(4k+1).\end{eqnarray}
\textbf{($iii$)}\
For each edge $\{u,v\}\in E(G)$, $d(u)+d(v)\geq4k+3$ holds. By Lemmas \ref{1} and \ref{2} and the induction hypothesis $\ex(4k-3,T)=4(k-1)^2+4(k-1)$ we get
\vspace{-0.5em}\begin{eqnarray}e(G)\leq2n-2+\ex(4k-3,T)=8k+4(k-1)^2+4(k-1)=f_{T}(4k+1).\end{eqnarray}
Therefore, $\ex(4k+1,T)\leq f_{T}(4k+1)$.

\textbf{Case 3}.\ \  When $n=4k+2$, we divide the proof of $\ex(4k+2,T)\leq f_{T}(4k+2)=4k^{2}+6k+1$ into 2 subcases. Let $G$ be a $(4k + 2)$-vertex $T$-free graph.\\
$(i)$\ If $\delta (G)\leq2k+1$, after removing a vertex of minimum degree and by the induction hypothesis $\ex(4k+1,T)=4k^{2}+4k$, we get \vspace{-0.5em}\begin{eqnarray}e(G)\leq \ex(4k+1,T)+2k+1\leq4k^{2}+6k+1=f_{T}(4k+2).\end{eqnarray}
\textbf{$(ii)$}\  If $\delta (G)\geq2k+2$, then for each edge $\{u,v\}\in E(G)$, $d(u)+d(v)\geq4k+4$. By Lemmas \ref{1} and \ref{2} and the induction hypothesis $\ex(4k-2,T)=4(k-1)^2+6(k-1)+1$, we get
\vspace{-0.5em}\begin{eqnarray}e(G)\leq2n-2+\ex(4k-2,T)=8k+2+4(k-1)^2+6(k-1)+1=f_{T}(4k+2). \end{eqnarray}
Therefore, $\ex(4k+2,T)\leq f_{T}(4k+2)$.

\textbf{Case 4}.\ \ When $n=4k+3$, we divide the proof of $\ex(4k+3,T)\leq f_{T}(4k+3)=4k^{2}+8k+3$ into 2 subcases. Let $G$ be a $(4k + 3)$-vertex $T$-free graph.\\
$(i)$\ If $\delta (G)\leq2k+2$, after removing a vertex of minimum degree and by the induction hypothesis $\ex(4k+2,T)=4k^{2}+6k+1$, we get
\vspace{-0.5em}\begin{eqnarray}e(G)\leq \ex(4k+2,T)+2k+2\leq4k^{2}+8k+3=f_{T}(4k+3).\end{eqnarray}
$(ii)$.\
If $\delta (G)\geq2k+3$, then for each edge $\{u,v\}\in E(G)$, $d(u)+d(v)\geq4k+6$. By Lemmas \ref{1} and \ref{2} and the induction hypothesis $\ex(4k-1,T)=4(k-1)^2+8(k-1)+3$, we get
\vspace{-0.5em} \begin{eqnarray}e(G)\leq2n-2+\ex(4k-1,T)=8k+4+4(k-1)^2+8(k-1)+3=f_{T}(4k+3).\end{eqnarray}
Therefore, $\ex(4k+3,T)\leq f_{T}(4k+3)$.

Now we show the base cases which are needed to complete the induction steps. Since our induction steps will go from $n-1$ to $n$, $n-2$ to $n$ and $n-4$ to $n$, we will require to show the statement is true for cases when $n=3,4,6$ and 9. 

When $n\leq4$, $K_{n}$ is the graph with the most number of edges, and $e(K_{n})=f_{T}(n)$.

When $n=5$, $e(K_{5})=10>f_{T}(5)$, the statement is not true, but we will see that the statement is true for $n=9$.

When $n=6$, let $v$ be a vertex with minimum degree. If $\delta(G)=1$, since $e(G-v)\leq10$, we get $e(G)\leq11$. If $\delta(G)=2$ and $e(G)=12$, then the only possibility is that $G-v$ is $K_5$, but then $T\subseteq G$, and we have $e(G)\leq11$. Suppose now $\delta(G)\geq3$. If $K_4\subseteq G$ and there exists a vertex $u\in V(G-K_4)$ which is adjacent to at least 3 vertices of the copy of $K_4$, then $w\in V(G-K_4-u)$ can be adjacent to at most one vertex of the $K_4$, otherwise, $T\subseteq G$. This contradicts $\delta(G)\geq3$. Then in this case it is only possible that $\{u,w\}\in E(G)$ and both $u$ and $w$ are adjacent to 2 vertices of the $K_4$ which implies that $e(G)\leq11$. If $K_{4}\nsubseteq G$, then by Tur\'{a}n's Theorem, we have $e(G)\leq12$ and the Tur\'{a}n graph $T(6,3)$ is the unique $K_4$-free graph which has 12 edges, however, $T\subseteq T(6,3)$, then $e(G)\leq11=f_{T}(6)$. Summarizing: $e(G)\leq11\leq f_{T}(6)$.

When $n=9$, suppose first that there exists a pair of vertices $\{u,v\}\in E(G)$, such that $d(u)+d(v)\leq10$. Deleting $\{u,v\}$ and using $\ex(7,T)=15$, we get $e(G)\leq9+15=24=f_{T}(9)$. If for each pair of vertices $\{u,v\}\in E(G)$, $d(u)+d(v)\geq11$ holds, by Lemma \ref{1}, we obtain $K_4\subseteq G$. Let $G^{'}$ denote the graph $G-K_4$. If $e(G^{'})\leq8$, since the number of edges between $K_4$ and $G^{'}$ is at most 10, we have $e(G)\leq6+10+8=24$. If $e(G^{'})\geq9$, then $K_4\subseteq G^{'}$ and the vertex $w\in G^{'}-K_{4}$ is adjacent to at least 3 vertices of the copy of $K_4$ in $G^{'}$. This implies that each vertex from $G-G{'}$ can be adjacent to at most 1 vertex of $G^{'}-w$, then the number of edges between $G-G{'}$ and $G^{'}$ is at most 8, we can conclude that, $e(G)\leq6+8+10=24$, $e(G)\leq24=f_{T}(9)$.

It is easy to see that the case $n=7$ can be proved using $n=3$ and $n=6$ (Case 4). Similarly, the case $n=8$ follows by $n=7$ and $n=4$ (Case 1). Hence the cases $n=6,7,8,9$ are settled forming a good bases for the induction. 
\end{proof}
Now, we determine the extremal graphs for $T$.
\vspace{-0.5em}
\begin{proof}[Proof of Theorem \ref{ET}]
Similarly to the proof of Theorem \ref{T}, first, we show the induction steps, in the end we will show the base cases which are needed to complete the induction.

Suppose that the extremal graphs for $T$ are as shown in Theorem \ref{T} for $l\leq n-1$. In the following cases, we will assume that $k\geq2$. 

Let $G$ be an $n$-vertex $T$-free graph with $e(G) = f_T(n)$. The proof is divided into 4 cases following the steps of the proof of Theorem \ref{T}.\\
\textbf{Case 1}.\ \ When $n =4k$, $f_T(n) = 4k^2 + 2k$.

\emph{\textbf{(i)}} \  If $\delta(G)\leq2k+1$, the equality in (5) holds only when there exists a $v\in V(G)$, such that $d(v)=\delta(G)=2k+1$ and $G-v$ is an extremal graph for $T$ on $4k-1$ vertices. By the induction hypothesis, $G-v$ can be either $T^{2k}_{4k-1}$ or $S^{2k}_{4k-1}$. Let $X^{'}$ and $Y^{'}$ be the classes in $G-v$ with size $2k$ and $2k-1$, respectively.

When $G-v$ is $T^{2k}_{4k-1}$, it can be easily checked that $v$ cannot be adjacent to the two endpoints of an edge which have two matched vertices located in different classes, otherwise, $T\subseteq G$, see Figure \ref{case1,i}. Let $w$ be the unmatched vertex in $Y^{'}$. Since $d(v)=2k+1$, $N(v)$ must contain the unmatched vertex $w\in Y^{'}$, then the only way to avoid $T\subseteq G$ is choosing $N(v)=w\cup X^{'}$. Consequently, $G=T^{2k}_{4k}$ holds.\\
\begin{figure}[ht]
    \centering
\begin{tikzpicture}

\draw[thick,blue] (2.1,-2) -- (2.8,-2) -- (1.4,-2)  (2.8,-2) -- (1.4,0);

\draw  (-.7,0) node[above]{$x_{3}$} (.7,0) node[above]{$x_2$} (1.4,0) node[above]{$x_1$};

\draw (0.7,-2) node[below]{$y_2$} (1.4,-2) node[below]{$y_1$};
\foreach \x in {-.7,-1.4,.7,1.4}{

\filldraw (\x,0) circle (2pt)  (\x,-2) circle (2pt);

}
\filldraw   (2.8,-2) circle (2pt) node[below]{$v$};

\draw (0,0) node{$\cdots$} (0,-2) node{$\cdots$};

%\filldraw (2.1,0) circle (2pt)  (2.1,-2) circle (2pt);

\draw (-.7,0) -- (-1.4,0)  (.7,0) -- (1.4,0);

\draw (-.7,-2) -- (-1.4,-2) (.7,-2) -- (1.4,-2);

%\draw (.7,0) node{$\cdots$} (0.7,-2) node{$\cdots$} ;

\draw  (2.45,-2) arc (0:360:2.15cm and .65cm);

\draw (-2.15,0) node[left]{$X'$};

\draw (-1.6,-1) node[left]{$K_{2k,2k-1}$};
\draw (-2.15,-2) node[left]{$Y'$};
\draw (2.3,0) arc (0:360:2.15cm and .75cm);

\begin{scope}[xshift=6cm, yshift=-2cm]
\filldraw (0,{sqrt(3)}) node[above]{$x_{2}$} circle (2 pt) (1,0) node[below]{$v$} circle (2 pt) (-1,0) node[below]{$x_3$} circle (2 pt) (0,0) node[below]{$y_1$} circle (2 pt) (.5,{sqrt(3)/2}) node[right]{$x_1$} circle (2 pt) (-.5,{sqrt(3)/2}) node[left]{$y_2$} circle (2 pt)  (0,2);

\draw (1,0) -- (-1,0) -- (0,{sqrt(3)}) -- (1,0) (0,0) -- (1/2,{sqrt(3)/2}) -- (-1/2,{sqrt(3)/2}) -- (0,0);
\end{scope}

\end{tikzpicture}
    \caption{}
    \label{case1,i}
\end{figure}

When $G-v$ is $S^{2k}_{4k-1}$, let $x_1$ denote the center of the star in $X^{'}$. If $v$ is adjacent to the two endpoints of the edge $\{x_{i},y_{j}\}$ $(x_i\in X{'}, y_i\in Y{'}, 2\leq i\leq2k,~1\leq j\leq2k-1)$, then $T\subseteq G$ (see Figure \ref{v}). We obtained a contradiction. But $d(v)=2k+1$ implies that this is always the case. \\
\begin{figure}[ht]
    \centering
\begin{tikzpicture}

\draw[thick,blue] (1.4,-2) -- (2.8,-2)  (2.8,-2) -- (1.4,0);

%\draw[thick,red] (-.7,0) -- (2.8,0) -- (.7,-2)  (2.1,-2) -- (2.8,0) -- (1.4,-2) ;

\foreach \x in {-.7,.7,1.4}{
\draw (\x,0) -- (.35,.6);
}

\filldraw (.35,.6) circle (2pt);

\draw  (-.7,0) node[below]{$x_{2k}$} (.7,0) node[below]{$x_3$} (1.4,0) node[below]{$x_2$} (.35,.6) node[right]{$x_1$};

\foreach \x in {-.7,.7,1.4}{

\filldraw (\x,0) circle (2pt)  (\x,-2) circle (2pt);

}
\filldraw  (2.8,-2) circle (2pt) node[below]{$v$};

\draw (0.7,-2) node[below]{$y_2$} (1.4,-2) node[below]{$y_1$};

\draw (0,0) node{$\cdots$} (0,-2) node{$\cdots$};

%\filldraw (2.1,0) circle (2pt)  (2.1,-2) circle (2pt);

%\draw (-.7,0) -- (-1.4,0)  (.7,0) -- (1.4,0);

%\draw (-.7,-2) -- (-1.4,-2) (.7,-2) -- (1.4,-2);

%\draw (.7,0) node{$\cdots$} (0.7,-2) node{$\cdots$} ;

\draw  (2.15,-2) arc (0:360:1.8cm and .5cm);

\draw (-1.8,0) node[left]{$X'$};

\draw (-1.5,-1) node[left]{$K_{2k,2k-1}$};

\draw (-1.8,-2) node[left]{$Y'$};
\draw (2.15,0) arc (0:360:1.8cm and .75cm);

\begin{scope}[xshift=6cm, yshift=-2cm]
\filldraw (0,{sqrt(3)}) node[above]{$v$} circle (2 pt) (1,0) node[below]{$x_3$} circle (2 pt) (-1,0) node[below]{$y_{2}$} circle (2 pt) (0,0) node[below]{$x_{1}$} circle (2 pt) (.5,{sqrt(3)/2}) node[right]{$y_1$} circle (2 pt) (-.5,{sqrt(3)/2}) node[left]{$x_2$} circle (2 pt) (0,2);

\draw (1,0) -- (-1,0) -- (0,{sqrt(3)}) -- (1,0) (0,0) -- (1/2,{sqrt(3)/2}) -- (-1/2,{sqrt(3)/2}) -- (0,0);
\end{scope}
\end{tikzpicture}
\caption{}
    \label{v}
\end{figure}

\emph{\textbf{(ii)}} \  If $\delta(G)\geq2k+2$, this implies that $e(G)\geq2k(2k+2)=4k^2+4k$, which contradicts the fact that $\ex(4k,T)=4k^2+2k$.

That is, $G$ can only be $T^{\frac{n}{2}}_{n}$.\\
\textbf{Case 2}.\ \ When $n =4k+1$, $f_T(n) = 4k^2 + 4k$.

\emph{\textbf{(i)}} \  If $\delta(G)\leq2k$, the equality in (7) holds only if there exists $v\in V(G)$, such that $d(v)=\delta(G)=2k$ and $G-v$ is an extremal graph for $T$ on $4k$ vertices. By the induction hypothesis, $G-v$ is $T^{2k}_{4k}$. All neighbors of $v$ should be located in the same class, otherwise, $T\subseteq G$, we get that $G$ is $T^{2k+1}_{4k+1}$, that is $T^{\lceil\frac{n}{2}\rceil}_{n}$.

If $\delta(G)\geq2k+1$, then for any pair of vertices $\{u,v\}\in V(G)$, $d(u)+d(v)\geq4k+2$. Here we distinguishing two subcases.

\emph{\textbf{(ii)}}\  Suppose that there exists an edge $\{u,v\}\in E(G)$ such that $d(u)+d(v)=4k+2$. The equality in (8) holds only if when $d(u)=d(v)=2k+1$ and $G-u-v$ is an extremal graph for $T$ on $4k-1$ vertices. By the induction hypothesis, $G-u-v$ can be either $T^{2k}_{4k-1}$ or $S^{2k}_{4k-1}$. Let $X^{'}$ and $Y^{'}$ be the classes in $G-u-v$ with size $2k$ and $2k-1$, respectively. 

 When $G-u-v$ is $T^{2k}_{4k-1}$, as in the previous case, neither $u$ nor $v$ can be adjacent to the two endpoints of an edge which have two matched vertices located in different classes, see Figure \ref{case1,i}. If $N(u)-v\neq X^{'}$, then $u$ is adjacent to the unmatched vertex $w$ in $Y^{'}$ and the other $2k-1$ neighbors of $u$ are all located in $X^{'}$, say, $N(u)-v-w=\{x_{1},\ldots x_{2k-1}\}$ and $\{x_{2k-1},x_{2k}\}\in E(X{'})$, otherwise, $T\subseteq G$. Since $|X^{'}|\geq4$, in this case, $v$ cannot be adjacent to $x_{i}$ $(1\leq i\leq 2k-2)$, otherwise, $T\subseteq G$, see Figure \ref{case2,ii}. Now $v$ should choose $2k$ neighbors among the rest $2k+1$ vertices in $V(G-u-v-\bigcup\limits_{i=1}^{2k-2}x_{i})$, which implies that $v$ is adjacent to the two endpoints of an edge which have two matched vertices locate in different classes as endpoints, then $T\subseteq G$. Hence, $N(u)-v=X^{'}$, similarly, $N(v)-u=X^{'}$. Thus, $G$ is $T^{2k+1}_{4k+1}=T^{2k}_{4k+1}$, that is $T^{\lceil\frac{n}{2}\rceil}_{n}$.
\begin{figure}[ht]
    \centering
\begin{tikzpicture}

\draw[thick,blue] (2.1,-2) -- (2.8,-2) -- (-.7,0) (2.8,-2) -- (.7,0) (2.8,-2) -- (1.4,0);

\draw[dashed,blue] (1.4,0) -- (3.5,-1);

\draw (-1.4,0) node[below]{$x_{2k}$} (-.7,0) node[above]{$x_{2k-1}$} (.7,0) node[below]{$x_2$} (1.4,0) node[below]{$x_1$};

\foreach \x in {-.7,-1.4,.7,1.4}{

\filldraw (\x,0) circle (2pt)  (\x,-2) circle (2pt);

}
\filldraw (2.1,-2) circle (2pt) node[below]{$w$}  (2.8,-2) circle (2pt) node[below]{$u$} -- (3.5,-1) circle (2pt) node[below]{$v$};

\draw (0,0) node{$\cdots$} (0,-2) node{$\cdots$};

%\filldraw (2.1,0) circle (2pt)  (2.1,-2) circle (2pt);

\draw (-.7,0) -- (-1.4,0)  (.7,0) -- (1.4,0);

\draw (-.7,-2) -- (-1.4,-2) (.7,-2) -- (1.4,-2);

%\draw (.7,0) node{$\cdots$} (0.7,-2) node{$\cdots$} ;

\draw  (2.45,-2) arc (0:360:2.15cm and .65cm);

\draw (-2.15,0) node[left]{$X'$};

\draw (-1.6,-1) node[left]{$K_{2k,2k-1}$};
\draw (-2.15,-2) node[left]{$Y'$};
\draw (2.3,0) arc (0:360:2.15cm and .75cm);

\begin{scope}[xshift=6cm, yshift=-2cm]
\filldraw (0,{sqrt(3)}) node[above]{$x_{2k-1}$} circle (2 pt) (1,0) node[below]{$v$} circle (2 pt) (-1,0) node[below]{$x_2$} circle (2 pt) (0,0) node[below]{$x_1$} circle (2 pt) (.5,{sqrt(3)/2}) node[right]{$u$} circle (2 pt) (-.5,{sqrt(3)/2}) node[left]{$w$} circle (2 pt)  (0,2);

\draw (1,0) -- (-1,0) -- (0,{sqrt(3)}) -- (1,0) (0,0) -- (1/2,{sqrt(3)/2}) -- (-1/2,{sqrt(3)/2}) -- (0,0);
\end{scope}

\end{tikzpicture}
    \caption{}
    \label{case2,ii}
\end{figure}

Let us now consider the case when $G-u-v$ is $S^{2k}_{4k-1}$. Let $x_{1}$ denote the center of the star in $X^{'}$. If $u$ is adjacent to the two endpoints of the edge $\{x_{i},y_{j}\}$ $(2\leq i\leq2k,~1\leq j\leq2k-1)$, then $T\subseteq G$. Thus, there are only two possibilities for $T\nsubseteq G$: $N(u)-v=X^{'}$ or $N(u)-v=Y^{'}\cup x_{1}$. The same holds for $v$ and it is easy to check that if $N(u)-v=N(v)-u$, then $T\subseteq G$. From the above, the only possibility for $T\nsubseteq G$ is that when $N(u)-v=X^{'}$ and $N(v)-u=Y^{'}\cup x_{1}$ or in the another way around, which implies that $G$ is $S^{2k+1}_{4k+1}$, that is $S^{\lceil\frac{n}{2}\rceil}_{n}$.

\emph{\textbf{(iii)}} \ Suppose that for each edge $\{u,v\}\in E(G)$, $d(u)+d(v)\geq4k+3$ holds. Let $d(v)=\delta(G)$, then either $d(v)=2k+1$ or $d(v)\geq2k+2$, but in both cases, each neighbor of $v$ has degree at least $2k+2$. Then all $4k+1$ vertices have degree at least $2k+1$, but $2k+1$ of them, which are the neighbors of $v$, have degree at least one larger. This implies that $e(G)\geq\frac{(4k+1)(2k+1)+2k+1}{2}=4k^2+4k+1$, which contradicts the fact that $\ex(4k+1,T)=4k^2+4k$.

 That is, $G$ can be either $T^{\lceil\frac{n}{2}\rceil}_{n}$ or $S^{\lceil\frac{n}{2}\rceil}_{n}$.\\
\textbf{Case 3}.\ \ When $n =4k+2$ we have $f_T(n) = 4k^2 + 6k+1$.

\emph{\textbf{(i)}}\  If $\delta(G)\leq 2k+1$, the equality holds in (10) only if there exists $v\in V(G)$, such that $d(v)=\delta(G)=2k+1$ and $G-v$ is an extremal graph for $T$ on $4k+1$ vertices. By the induction hypothesis, $G-v$ can be either $T^{2k+1}_{4k+1}$ or $S^{2k+1}_{4k+1}$. 

Suppose first that $G-v$ is $T^{2k+1}_{4k+1}$. Let $X^{'}$ any $Y^{'}$ be the classes in $G-v$ with size $2k+1$ and $2k$, $w$ be the unmatched vertex in $X^{'}$. The vertex $v$ cannot be adjacent to the two endpoints of an edge which have two matched vertices located in different classes. Since $d(v)=2k+1$,
there are two possibilities to avoid $T$: $N(v)=X^{'}$ or
$N(v)=Y^{'}\cup w$, which implies that $G$ is either $T^{2k+1}_{4k+2}$ or $T^{2k+2}_{4k+2}$, that is $T^{\frac{n}{2}}_{n}$ or $T^{\frac{n}{2}+1}_{n}$. 

When $G-v$ is $S^{2k+1}_{4k+1}$. Let $X^{'}$ be the class in $G-v$ which contains a star and $Y^{'}$ be the other class of the $G-v$. Also, let $x_1$ denote the center of the star in $X^{'}$. Since, $d(v)=2k+1$ and $v$ cannot be adjacent to the two endpoints of an edge which is not incident with $x_1$, we get either $N(v)=Y^{'}\cup x_1$ or $N(v)=X^{'}$. If $N(v)=X^{'}$, $G$ is $S^{2k+1}_{4k+2}$, that is $S^{\frac{n}{2}}_{n}$. If $N(v)=Y^{'}\cup x_1$, $G$ is $S^{2k+2}_{4k+2}$, that is $S^{\frac{n}{2}+1}_{n}$. It is easy to see that $S^{\frac{n}{2}+1}_{n}$ is isomorphic to $S^{\frac{n}{2}}_{n}$.

\emph{\textbf{(ii)}}\  If $\delta(G)\geq 2k+2$, then $e(G)\geq(k+1)(4k+2)=4k^2+6k+2$, which contradicts the fact that $\ex(4k+2,T)=4k^2+6k+1$.

Therefore, $G$ can be $T^{\frac{n}{2}}_{n}$, $T^{\frac{n}{2}+1}_{n}$ or $S^{\frac{n}{2}}_{n}$.\\
\textbf{Case 4}.\ \ When $n =4k+3$ we have $f_T(n) = 4k^2 + 8k+3$.

\emph{\textbf{(i)}}\ If $\delta(G)\leq 2k+2$, the equality holds in (12) only if there exists $v\in V(G)$, such that $d(v)=\delta(G)=2k+2$ and $G-v$ is an extremal graph for $T$ on $4k+2$ vertices. By the induction hypothesis, $G-v$ can be $T^{2k+1}_{4k+2}$, $T^{2k+2}_{4k+2}$ or $S^{2k+1}_{4k+2}$.\\
 When $G-v$ is $T^{2k+1}_{4k+2}$ or $T^{2k+2}_{4k+2}$, similarly to Case 1 $(i)$, $G$ can only be $T^{2k+2}_{4k+3}$, that is $T^{\left\lceil\frac{n}{2}\right\rceil}_{n}$. \\
 When $G-v$ is $S^{2k+1}_{4k+2}$, similarly to Case 2 $(ii)$, $G$ can only be $S^{2k+2}_{4k+3}$, that is $S^{\left\lceil\frac{n}{2}\right\rceil}_{n}$.
 
\emph{\textbf{(ii)}}\ If $\delta(G)\geq 2k+3$, then $e(G)\geq\frac{(2k+3)(4k+3)}{2}>4k^2+9k+4>4k^2+8k+3$, which contradicts the fact that $\ex(4k+3,T)=4k^2+8k+3$.

Therefore, in this case, $G$ is either $T^{\left\lceil\frac{n}{2}\right\rceil}_{n}$ or $S^{\left\lceil\frac{n}{2}\right\rceil}_{n}$.

 Now we check the base cases which are needed to complete the induction.
 
When $n=4$, $\ex(4,T)=6$, $K_4$ is the extremal graph which has the maximum number of edges on $4$ vertices that does not contain $T$ as a subgraph.

Although the Theorem does not hold for $n=6$, we determine the extremal graphs in this case because it will help us to determine them for some other $n$'s.

When $n=6$, $\ex(6,T)=11$. It follows from the proof of Theorem \ref{T}, when $\delta(G)=1$, the only extremal graph for $T$ is as shown in Figure \ref{T_6}$(a)$. When $\delta(G)=2$, the only extremal graph for $T$ is as shown in Figure \ref{T_6}$(b)$. Since $\delta(G)\geq4$ implies $e(G)\geq12$, this is not possible. The only remaining case is $\delta(G)=3$. When $\delta(G)=3$ and $K_4\subseteq G$, by case analysis we obtain that the extremal graphs for $T$ can be Figure \ref{T_6}$(c)$ and Figure \ref{T_6}$(d)$, which are $T^3_6$ and $T^4_6$. Suppose now that $\delta(G)=3$ and $K_4\nsubseteq G$. Let $d(v)=\delta(G)=3$, then $e(G-v)=8$, the only possibility is  that $G-v$ is $T(5,3)$. It is easy to check that $G$ can only be $S^3_6$, see Figure \ref{T_6}$(e)$.\\
\begin{figure}[ht]
    \centering
\begin{tikzpicture}[scale=.7] %
\foreach \xi in {0,72,...,288}{
\pgfmathsetmacro{\z}{\xi+72}
\foreach \yi in {\xi,\z,...,288}{

\filldraw  (xyz polar cs:angle=\xi,radius=1) circle (2pt) -- (xyz polar cs:angle=\yi,radius=1) circle (3pt);

}
}

\filldraw (1,0) -- (2,0) circle (3pt);
\draw (0,-1.5) node[below]{(a)};

\end{tikzpicture}\quad
\begin{tikzpicture}[scale=.7]
\foreach \xi in {36,108,...,287}{
\pgfmathsetmacro{\z}{\xi+72}
\foreach \yi in {\xi,\z,...,287}{

\filldraw  (xyz polar cs:angle=\xi,radius=1) circle (2pt) -- (xyz polar cs:angle=\yi,radius=1) circle (3pt);

}
}

\draw (xyz polar cs:angle=324,radius=1) -- (xyz polar cs:angle=108,radius=1) (xyz polar cs:angle=324,radius=1) -- (xyz polar cs:angle=180,radius=1);

\filldraw (xyz polar cs:angle=324,radius=1) -- (2,0) circle (3pt) -- (xyz polar cs:angle=36,radius=1);
\draw (0,-1.5) node[below]{(b)};
\end{tikzpicture}\quad
\begin{tikzpicture}
\draw (0,0) -- (0,1) (0,0) -- (1,1)  (1,0) -- (0,1) (1,0) -- (1,1) (0,0) -- (1,0)  (0,1) -- (1,1);
\draw [thick,blue] (2,0) -- (0,1) (2,0) -- (1,1) (2,0) -- (2,1) (0,0) -- (2,1) (1,0) -- (2,1);
\foreach \x in {0,1,2} {
\filldraw (\x,0) circle (2pt) (\x,1) circle (2pt);
}
\draw (1,-0.5) node[below]{(c)};
\end{tikzpicture}\quad
\begin{tikzpicture}
\draw (0,0) -- (0,1) (0,0) -- (1,0) (0,0) -- (1,1) (1,0) -- (0,1) (1,0) -- (1,1) (0,1) -- (1,1);
\draw [thick,blue] (0,0) -- (2,1) (0,0) -- (3,1) (1,0) -- (3,1) (1,0) -- (2,1) (2,1) -- (3,1) ;
\foreach \x in {0,1} {
\filldraw (\x,0) circle (2pt) (\x,1) circle (2pt);
}
\filldraw (3,1) circle (2pt);
\filldraw (2,1) circle (2pt);
\draw (1,-0.5) node[below]{(d)};
\end{tikzpicture}\quad
\begin{tikzpicture}
\draw (0,0) -- (0,1) (0,0) -- (1,1.5) (0,0) -- (2,1) (1,0) -- (0,1) (1,0) -- (1,1.5) (1,0) -- (2,1) (0,1) -- (1,1.5) -- (2,1);
\draw [thick,blue] (2,0) -- (0,1) (2,0) -- (1,1.5) (2,0) -- (2,1);
\foreach \x in {0,2} {
\filldraw (\x,0) circle (2pt) (\x,1) circle (2pt);
}
\filldraw (1,0) circle (2pt);
\filldraw (1,1.5) circle (2pt);
\draw (1,-0.5) node[below]{(e)};
\end{tikzpicture}
\caption{Extremal graphs for $T$ when $n=6$.}
\label{T_6}
\end{figure}
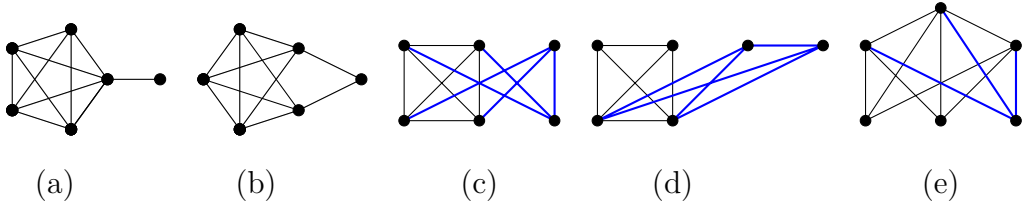

\begin{figure}[ht]
    \centering
\begin{tikzpicture}
\draw (0,0) -- (0,1) (0,0) -- (1,1)  (1,0) -- (0,1) (1,0) -- (1,1) (0,0) -- (1,0)  (0,1) -- (1,1);
\draw  (2,0) -- (0,1) (2,0) -- (1,1) (2,0) -- (2,1) (0,0) -- (2,1) (1,0) -- (2,1);
\foreach \x in {0,1,2} {
\filldraw (\x,0) circle (2pt) (\x,1) circle (2pt);
}
\draw (1,-0.5) node[below]{$T^3_6$};
\end{tikzpicture}\quad \quad
\begin{tikzpicture}
\draw (0,0) -- (0,1) (0,0) -- (1,1) (0,0) -- (2,1) (1,0) -- (0,1) (1,0) -- (1,1) (1,0) -- (2,1) (2,0) -- (0,1) (2,0) -- (1,1) (2,0) -- (2,1) (0,0) -- (1,0)  (0,1) -- (1,1);
\draw [thick,blue] (3,1) -- (0,0) (3,1) -- (1,0) (3,1) -- (2,0)  (2,1) --(3,1);
\foreach \x in {0,1,2} {
\filldraw (\x,0) circle (2pt) (\x,1) circle (2pt);
}
\filldraw  (3,1) circle (2pt);
\draw (1,-0.5) node[below]{(a)};
\end{tikzpicture}\quad\quad\quad\quad\quad
\begin{tikzpicture}
\draw (0,0) -- (0,1) (0,0) -- (1,0) (0,0) -- (1,1) (1,0) -- (0,1) (1,0) -- (1,1) (0,1) -- (1,1);
\draw (0,0) -- (2,1) (0,0) -- (3,1) (1,0) -- (3,1) (1,0) -- (2,1) (2,1) -- (3,1) ;
\foreach \x in {0,1} {
\filldraw (\x,0) circle (2pt) (\x,1) circle (2pt);
}
\filldraw (3,1) circle (2pt);
\filldraw (2,1) circle (2pt);
\draw (1,-0.5) node[below]{$T^4_6$};
\end{tikzpicture}\quad\quad
\begin{tikzpicture}
\draw (0,0) -- (0,1) (0,0) -- (1,1) (0,0) -- (2,1) (1,0) -- (0,1) (1,0) -- (1,1) (1,0) -- (2,1) (0,0) -- (1,0)  (0,1) -- (1,1) (3,1) -- (0,0) (3,1) -- (1,0) (2,1) --(3,1);
\draw [thick,blue] (3,1) -- (2,0) (2,0) -- (0,1) (2,0) -- (1,1)(2,0) -- (2,1);
\foreach \x in {0,1,2} {
\filldraw (\x,0) circle (2pt) (\x,1) circle (2pt);
}
\filldraw  (3,1) circle (2pt);
\draw (1,-0.5) node[below]{(a)};
\end{tikzpicture}\quad\quad
\begin{tikzpicture}
\draw (0,0) -- (0,1) (0,0) -- (1,1.5) (0,0) -- (2,1) (1,0) -- (0,1) (1,0) -- (1,1.5) (1,0) -- (2,1) (0,1) -- (1,1.5) -- (2,1);
\draw (2,0) -- (0,1) (2,0) -- (1,1.5) (2,0) -- (2,1);
\foreach \x in {0,2} {
\filldraw (\x,0) circle (2pt) (\x,1) circle (2pt);
}
\filldraw (1,0) circle (2pt);
\filldraw (1,1.5) circle (2pt);
\draw (1,-0.5) node[below]{$S^3_6$};
\end{tikzpicture}\quad\quad
\begin{tikzpicture}
\draw (0,0) -- (0,1) (0,0) -- (1,1.5) (0,0) -- (2,1) (1,0) -- (0,1) (1,0) -- (1,1.5) (1,0) -- (2,1) (0,1) -- (1,1.5) -- (2,1);
\draw (2,0) -- (0,1) (2,0) -- (1,1.5) (2,0) -- (2,1);
\draw [thick,blue] (3,1.3) -- (0,0) (3,1.3) -- (1,0) (3,1.3) -- (2,0) (3,1.3) -- (1,1.5);
\foreach \x in {0,2} {
\filldraw (\x,0) circle (2pt) (\x,1) circle (2pt);
}
\filldraw (1,0) circle (2pt);
\filldraw (1,1.5) circle (2pt);
\filldraw (3,1.3) circle (2pt);
\draw (1,-0.5) node[below]{(b)};
\end{tikzpicture}
\caption{Extremal graphs for $T$ when $n=7$.}
\label{T_7}
\end{figure}
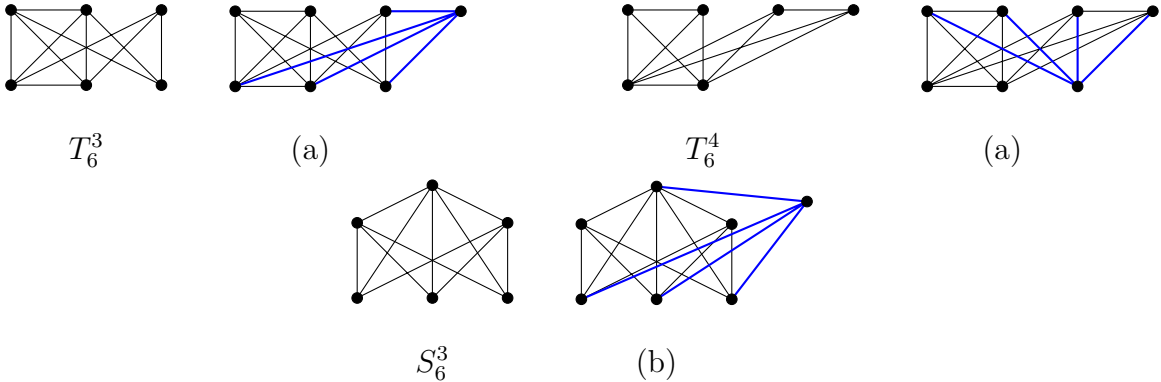

Suppose now that $n=7$, $\ex(7,T)=15$. It is not possible that $\delta(G)\leq3$, otherwise, $e(G)\leq 3+ex(6,T)=14$. Also, it is not possible that $\delta(G)\geq5$, otherwise, $e(G)>17$. Both are contradict with $e(G)=15$. Let $d(v)=\delta(G)$, the only possibility is that $\delta(G)=4$ and $G-v$ is a 6-vertex $T$-free graph. Since $d(v)=4$, we have $\delta(G-v)\geq3$, which implies that structures $(a)$ and $(b)$ in Figure \ref{T_6} are not possible. If $G-v$ is $T^3_6$ or $T^4_6$, then $G$ can only be $(a)$ in Figure \ref{T_7}, that is $T^4_7$. If $G-v$ is $S^3_6$, then $G$ can only be (b) in Figure \ref{T_7}, that is $S^4_7$.

Because case $n=8$ needs only the case $n=7$ (Case 1), case $n=9$ needs cases $n=7$ and $n=8$ (Case 2).
These base cases complete the proof.
\end{proof}
We will need the following statement later. It express that the "second best" graphs can be also well described if $4|n$.
\begin{proposition}\label{12}
 Let $n$ $(n\geq8)$ be a natural number such that $4|n$ and $G$ be an $n$-vertex $T$-free graph with $\frac{n^2}{4} + \frac{n}{2} - 1$ edges, then $G$ can only be $T^{\frac{n}{2}}_n$ minus an edge, $S^{\frac{n}{2}}_n$ or $S^{\frac{n}{2}+1}_n$.
\end{proposition}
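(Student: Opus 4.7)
The plan is to mimic Case~1 of the proof of Theorem~\ref{ET}, carefully tracking the single missing edge. Set $n=4k$, so that $e(G)=4k^{2}+2k-1$. Since $2e(G)=8k^{2}+4k-2<n(2k+1)$, the average degree of $G$ is strictly less than $2k+1$, hence $\delta(G)\le 2k$. Pick a vertex $v$ of minimum degree; then $e(G-v)\ge (4k^{2}+2k-1)-2k=4k^{2}-1=\ex(4k-1,T)$ by Theorem~\ref{T}, which forces $d(v)=2k$ exactly and makes $G-v$ an extremal $T$-free graph on $4k-1$ vertices. Theorem~\ref{ET} then yields $G-v\in\{T^{2k}_{4k-1},\,S^{2k}_{4k-1}\}$, and I split into these two cases.

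Suppose first that $G-v=T^{2k}_{4k-1}$, with classes $X'$ (size $2k$, perfectly matched) and $Y'$ (size $2k-1$, matched except for one vertex $w$). The same flattened-tetrahedron obstruction used in Case~1(i) of the proof of Theorem~\ref{ET} applies: whenever $x\in X'$ and $y\in Y'\setminus\{w\}$, the vertex $v$ cannot be adjacent to both $x$ and $y$, because together with the $X'$-match $x'$ of $x$, the $Y'$-match $y'$ of $y$, and any vertex of $Y'\setminus\{y,y'\}$ (which exists since $|Y'|=2k-1\ge 3$) the six vertices span a copy of $T$. This forces either $N(v)\cap X'=\emptyset$ or $N(v)\cap(Y'\setminus\{w\})=\emptyset$. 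The former yields $|N(v)|\le|Y'|=2k-1<2k$, a contradiction; hence $N(v)\subseteq X'\cup\{w\}$, and exactly one vertex $z\in X'\cup\{w\}$ is missing from $N(v)$. If $z=w$, viewing $v$ as a new member of the $Y$-side shows that $G$ equals $T^{n/2}_{n}$ with the matching edge $\{v,w\}$ deleted; if $z\in X'$, the same repartition identifies $G$ with $T^{n/2}_{n}$ minus the bipartite edge $\{v,z\}$. In either sub-case $G$ is $T^{n/2}_{n}$ minus an edge.

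Now suppose $G-v=S^{2k}_{4k-1}$, and let $c$ be the star centre in $X'$. A parallel obstruction (corresponding to the star-case picture in Case~1(i) of Theorem~\ref{ET}) shows that $v$ cannot be simultaneously adjacent to a leaf $x\in X'\setminus\{c\}$ and a vertex $y\in Y'$: then $v$, $x$, $y$, $c$, together with any $f\in X'\setminus\{c,x\}$ and any $e\in Y'\setminus\{y\}$, span a copy of $T$. Consequently $N(v)\cap(X'\setminus\{c\})=\emptyset$ or $N(v)\cap Y'=\emptyset$. In the first alternative the degree constraint forces $N(v)=\{c\}\cup Y'$, and taking $A=X'\cup\{v\}$, $B=Y'$ exhibits $G$ as $K_{2k+1,2k-1}$ plus a $(2k+1)$-vertex star inside $A$ centred at $c$ (its leaves being the original $2k-1$ leaves together with $v$); this is $S^{n/2+1}_{n}$. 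In the second alternative $N(v)=X'$, and taking $A=X'$, $B=Y'\cup\{v\}$ exhibits $G$ as $K_{2k,2k}$ plus a $2k$-vertex star in $A$, which is $S^{n/2}_{n}$.

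The main obstacle is the careful verification of the two flattened-tetrahedron obstructions, since one must locate six distinct vertices spanning a copy of $T$; this relies on $|X'|\ge 4$ and $|Y'|\ge 3$, which both hold for every $k\ge 2$, i.e., for $n\ge 8$ as assumed. Once $N(v)$ is pinned down, the remaining identification of $G$ with one of the three graphs listed in the statement is just the same repartition trick that appears repeatedly in the proof of Theorem~\ref{ET}.
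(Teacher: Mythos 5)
Your proposal is correct and follows essentially the same route as the paper: reduce to a minimum-degree vertex $v$ with $d(v)=n/2$ via the edge count, invoke Theorem \ref{ET} to identify $G-v$ as $T^{2k}_{4k-1}$ or $S^{2k}_{4k-1}$, and then pin down $N(v)$ using the flattened-tetrahedron obstructions. The only (immaterial) deviation is your choice of a third $Y'$-vertex rather than a third $X'$-vertex as the sixth point of the forbidden copy of $T$ in the $T^{2k}_{4k-1}$ case, which still yields a valid embedding of $T$ (central triangle $\{x,x',y\}$ with outer vertices $v$, $y'$, $z$).
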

\begin{proof}
We can suppose that $\delta(G) \leq \frac{n}{2}$, otherwise, $e(G) \geq \frac{n^2}{4} + \frac{n}{2}$. Let $v\in V(G)$ and $d(v)=\delta(G)$, then $e(G) \leq d(v) + \ex(n-1,T) \leq  \frac{n^2}{4}+\frac{n}{2}- 1,$ the equality holds only if $d(v) = \frac{n}{2}$ and $G-v$ is either $T^{\left\lceil\frac{n-1}{2}\right\rceil}_{n-1}$ or $S^{\left\lceil\frac{n-1}{2}\right\rceil}_{n-1}$. When $G-v$ is $T^{\left\lceil\frac{n-1}{2}\right\rceil}_{n-1}$, let $w$ be the unmatched vertex in $Y^{'}$ and $X^{'}=\{x_1,\dots,x_{\left\lceil\frac{n-1}{2}\right\rceil}\}$, $X^{'}$ and $Y^{'}$ be the classes of $G-v$ with size $\left\lceil\frac{n-1}{2}\right\rceil$ and $\left\lfloor\frac{n-1}{2}\right\rfloor$, respectively. Since $d(v)=\frac{n}{2}$ and $v$ cannot be adjacent to the two endpoints of an edge which have two matched vertices located in different classes, no matter $N(v)=X^{'}$ or $N(v)=X^{'}-x_{i}\cup w$ $(1\leq i\leq \left\lceil\frac{n-1}{2}\right\rceil)$,
$G$ is $T^{\frac{n}{2}}_n$ minus an edge in both cases. When $G-v$ is $S^{\left\lceil\frac{n-1}{2}\right\rceil}_{n-1}$, let $x_1$ be the center of the star in $X{'}$, $X^{'}=\{x_1,\dots,x_{\left\lceil\frac{n-1}{2}\right\rceil}\}$ and $Y^{'}=\{y_1,\dots,y_{\left\lfloor\frac{n-1}{2}\right\rfloor}\}$ be the classes of $G-v$ with size $\left\lceil\frac{n-1}{2}\right\rceil$ and $\left\lfloor\frac{n-1}{2}\right\rfloor$, respectively. Since $v$ cannot be adjacent to the two endpoints of the edge $\{ x_i,y_i\}$ $(2\leq i\leq \left\lceil\frac{n-1}{2}\right\rceil, 1\leq j\leq \left\lfloor\frac{n-1}{2}\right\rfloor)$ and $d(v)=\frac{n}{2}$, which implies that $N(v)=x_1\cup Y^{'}$ or $N(v)=X^{'}$. Therefore, $G$ can be either $S^{\frac{n}{2}}_n$ or $S^{\frac{n}{2}+1}_n$.
\end{proof}
\subsection{The Tur\'an number and the extremal graphs for $P^2_6$}
\begin{proof}[Proof of Theorem \ref{P6}]
Let
 $$
f_{P^{2}_{6}}(n)=\left\{
\begin{aligned}
&\left\lfloor\frac{n^{2}}{4}\right\rfloor+\left\lfloor\frac{n-1}{2}\right\rfloor,~n\equiv 1,2,3~(\bmod~6), \\
&\left\lfloor\frac{n^{2}}{4}\right\rfloor+\left\lceil\frac{n}{2}\right\rceil,~\text{otherwise}.
\end{aligned}
\right.
$$
The fact that $\ex(n, P^{2}_{6})\geq\left\lfloor\frac{n^{2}}{4}\right\rfloor+\left\lceil\frac{n}{2}\right\rceil$, when $n\equiv0,4,5~(\bmod~6)$, follows from the constructions $H^{\frac{n}{2}}_{n}$, $H^{\frac{n}{2}+1}_{n}$ and $H^{\lceil\frac{n}{2}\rceil}_{n}$, respectively.
 The fact that $\ex(n, P^{2}_{6})\geq\left\lfloor\frac{n^{2}}{4}\right\rfloor+\left\lfloor\frac{n-1}{2}\right\rfloor$, when $n\equiv1,2,3~(\bmod~6)$, follows from the constructions $F^{\lceil\frac{n}{2}\rceil,j}_{n}$.\\
It remains to show the inequality
\begin{eqnarray}\label{3}
\ex(n,P^{2}_{6})\leq f_{P^{2}_{6}}(n)
\end{eqnarray}
by induction on $n$.

Let $G$ be an $n$-vertex $P^{2}_{6}$-free graph. Since our induction step will go from $n-6$ to $n$, we have to find a base case in each residue class $\bmod~6$.

When $n\leq4$, $K_n$ is the graph with the most number of edges and $e(K_{n})=f_{P^{2}_{6}}(n)$. 

When $n=6$, if $P^{2}_{5}\nsubseteq G$, by Theorem \ref{P5}, $e(G)\leq\left\lfloor\frac{5^2+5}{4}\right\rfloor=7<f_{P^{2}_{6}}(6)$. 
If $P^{2}_{5}\subseteq G$, $K_{5}\nsubseteq G$ and $e(G)\geq13$, it can be checked that the vertex $v\in V(G-P^{2}_{5})$ can be adjacent to at most 3 vertices of the copy of $P^{2}_{5}$, otherwise $P^{2}_{6}\subseteq G$, in this case, $d(v)\geq13-9=4$ then $P^2_6\subseteq G$.  If $K_{5}\subseteq G$, the vertex $v\in V(G-K_{5})$ is adjacent to at most one vertex of the $K_{5}$, otherwise, $P^{2}_{6}\subseteq G$. Therefore, $e(G)\leq11<f_{P^{2}_{6}}(6)$.

When $n=5$, since $e(K_{5})=10>f_{P^{2}_{6}}(5)$, the statement is not true, then we show that the statement is true for $n=11$. If $P^{2}_{5}\nsubseteq G$, by Theorem \ref{P5},
$e(G)\leq\left\lfloor\frac{11^2+11}{4}\right\rfloor<f_{P^{2}_{6}}(11)$. If $P^{2}_{5}\subseteq G$, first suppose that the graph spanned by the vertices of the copy of $P^{2}_{5}$ have at most 8 edges. It can be checked that every triangle can be adjacent to at most 7 edges of the $P^{2}_{5}$, otherwise, $P^2_6\subseteq G$. When there exists a triangle as subgraph in $G-V(P^{2}_{5})$, we get $e(G)\leq 8+7+9+\ex(6,P^2_6)=36=f_{P^{2}_{6}}(6)$. If not, $e(G)\leq 8+18+9=35<f_{P^{2}_{6}}(6)$. If $K^{-}_{5}\subseteq G$ ($K_5$ minus an edge) then each vertex $v\in V(G-K^{-}_{5})$ is adjacent to at most 2 vertices of $K^{-}_{5}$. We get $e(G)\leq 9+12+\ex(6,P^2_6)=33<f_{P^{2}_{6}}(6)$. 
If $K_{5}\subseteq G$ then each vertex $v\in V(G-P^{2}_{5})$ is adjacent to at most one vertex of $K_{5}$. Altogether we have at most $10+6+\ex(6,P^2_6)=28$ edges. From the above, $e(G)\leq36=f_{P^{2}_{6}}(11)$.

Suppose (\ref{3}) holds for all $l\leq n-1$ $(l\neq5)$, the following proof is divided into 2 parts.\\
\textbf{Case 1}.\ \ If $T\subseteq G$, then each vertex $v\in V(G-T)$ is adjacent to at most 3 vertices of the copy of $T$, otherwise, $P^{2}_{6}\subseteq G$. The graph spanned by the vertices of the copy of $T$ cannot have more than $ex(6,P^2_6)=12$ edges. Since $G-T$ is an $(n-6)$-vertex $P^{2}_{6}$-free graph and $\ex(6,T)=12$, we have 
\begin{eqnarray}\label{P_6}
e(G)\leq12+3(n-6)+e(G-T)\leq3n-6+\ex(n-6,P^{2}_{6}).\end{eqnarray}
By the induction hypothesis,
$$ \ex(n-6,P^{2}_{6})\leq f_{P^{2}_{6}}(n-6)=\left\{
\begin{aligned}
&\left\lfloor\frac{(n-6)^{2}}{4}\right\rfloor+\left\lfloor\frac{n-7}{2}\right\rfloor,~n\equiv1,2,3~(\bmod~6), \\
&\left\lfloor\frac{(n-6)^{2}}{4}\right\rfloor+\left\lceil\frac{n-6}{2}\right\rceil,~\text{otherwise}.
\end{aligned}
\right.$$
We get
 $$\ex(n,P^{2}_{6})\leq\left\{
\begin{aligned}
&3n-6+\left\lfloor\frac{(n-6)^{2}}{4}\right\rfloor+\left\lfloor\frac{n-7}{2}\right\rfloor=\left\lfloor\frac{n^{2}}{4}\right\rfloor+\left\lfloor\frac{n-1}{2}\right\rfloor,~n\equiv1,2,3~(\bmod~6), \\
&3n-6+\left\lfloor\frac{(n-6)^{2}}{4}\right\rfloor+\left\lceil\frac{n-6}{2}\right\rceil=\left\lfloor\frac{n^{2}}{4}\right\rfloor+\left\lceil\frac{n}{2}\right\rceil,~\text{otherwise}.
\end{aligned}
\right.$$

\textbf{Case 2}.\ \ If $T\nsubseteq G$, by Theorem \ref{T}, $e(G)\leq \ex(n,T)\leq f_{P^{2}_{6}}(n)$ holds unless $n\equiv 8~(\bmod~12)$. When $n\equiv 8~(\bmod~12)$, then $e(G)\leq \ex(n,T)= f_{P^{2}_{6}}(n)+1$, however, by Theorem \ref{ET}, the equality holds only if $G$ is $T^{\frac{n}{2}}_{n}$, but $P^{2}_{6}\subseteq T^{\frac{n}{2}}_{n}~ (n\geq8)$, which implies that $e(G)\leq \ex(n,T)-1=f_{P^{2}_{6}}(n)$.\\
Summarizing, we obtain  \begin{displaymath}
\ex(n,P^2_6)= f_{P^{2}_{6}}(n)=\left\{
\begin{aligned}
&\left\lfloor\frac{n^{2}}{4}\right\rfloor+\left\lfloor\frac{n-1}{2}\right\rfloor,~n\equiv 1,2,3~(\bmod~6), \\
&\left\lfloor\frac{n^{2}}{4}\right\rfloor+\left\lceil\frac{n}{2}\right\rceil,~\otherwise. 
\end{aligned}
\right. 
\end{displaymath}
\end{proof}
\begin{proof}[Proof of Theorem \ref{EP6}]
It is obvious that
\begin{eqnarray}
\ex(n,T)\leq \ex(n,P^2_6),~\mathrm{except~when}~n\equiv 8~(\bmod~ 12).\end{eqnarray}
with strict inequality only when
\begin{eqnarray}
n \equiv~5,6,7,\mathrm{or}~11~(\bmod~12).\end{eqnarray}
We want to determine the graphs $G$ containing no copy of $P_6^2$ as a subgraph and satisfying $e(G)={\rm ex}(n, P_6^2)$.
Therefore suppose that $G$ possesses these properties. We claim that $G$ either contains a copy of $T$ as a subgraph
or it is either $F_n^{\lceil{n\over 2}\rceil ,\lceil{n\over 2}\rceil}$ or $F_n^{{n\over 2}+1 ,{n\over 2}+1}.$
If $n$ belongs to the set of integers given in (17) then this is obvious, since we have a strict inequality in (16). On the other hand for the other values of $n$ (except $n\equiv 8 \pmod{12}$) we obtain  ${\rm ex}(n,P_6^2)= {\rm ex}(n,T)=e(G)$. Theorem 6 describes these graphs.
However $G$ cannot be $T_n^{\lceil{n\over 2}\rceil}$ or $T_n^{{n\over 2}+1}$, because these graphs contain $P_6^2$
as a subgraph if $n\geq 7$. (In the case of $n=6$ we had strict inequality in (16).) The other possibility by Theorem 6 is that $G=S_n^{\lceil{n\over 2}\rceil}=F_n^{\lceil{n\over 2}\rceil ,\lceil{n\over 2}\rceil}.$
In the exceptional case we can use Proposition 11. According to this $G$ could be $T_n^{n\over 2}, S_n^{n\over 2}$ or
$S_n^{{n\over 2}+1}$. The first of them is excluded since $P_6^2\subset T_n^{n\over 2}$ the second and third ones can be written in the form $F_n^{{n\over 2},{n\over 2}}$ and $F_n^{{n\over 2}+1 ,{n\over 2}+1}.$

From now on we suppose that $e(G)= {\rm ex}(n,P_6^2)$, the graph $G$ contains a copy of $T$ and no copy of $P_6^2$ and prove by induction that $G$ is a graph given in the theorem.

Let us list some graphs $L$ (coming up in the forthcoming proofs) containing $P_6^2$ as a subgraph:

($\alpha$) $L$ is obtained by adding any edge to $T$ different from $\{ a,e\}, \{ d,c\}$ and $\{ b,f\}$ on Figure 4.

($\beta$) Add  the edges $\{ a,e\}, \{ d,c\}, \{b,f\}$ to $T$ resulting in $T^{\prime}$. The graph $L$ is obtained by adding a new vertex $u$ to $T^{\prime}$ which is adjacent to three vertices of $T^{\prime}$ different from the sets
$\{b,c,e\}$ and $\{a,d,f\}$.

($\gamma$) $L$ is obtained by adding two new adjacent vertices $u$ and $v$ to $T^{\prime}$, which are both adjacent to $b, c$ and $e$. Then e.g. the square of the path $\{ u,v,c,e,b,d\}$ is in $L$.

($\delta$) $L$ is obtained by adding 4 new vertices $u,v,w,x$, forming a complete graph, to $T^{\prime}$, all of them adjacent to $a,d$ and $f$. Then e.g. the square of the path $\{ a,u,v,w,x,d\}$ is in $L$.

($\epsilon$) $L$ consists of a complete graph on 5 vertices and a 6th vertex adjacent to two of them.

($\zeta$) The vertices of $L$ are $p_i (1\leq i\leq 4)$ and $q_j (1\leq j\leq 2)$ where $p_1, p_2, p_3, p_4$ span a path and all pairs $(p_i, q_j)$ are adjacent. Then the square of the path $\{ p_1, q_1, p_2, p_3 , q_2, p_4\}$ is $L$.

Let us start with the base cases. Let $n=6$ and suppose $T\subset G$. By ($\alpha $) only the edges $\{ a,e\}, \{ d,c\}$ and $\{ b,f\}$ can be added to $T$. To obtain  ${\rm ex}(6,P_6^2)=12$ edges all three of them should be added. The so obtained graph $T^{\prime}$ is really $H_6^3$.

Consider now the case $n=7$. It is clear that (\ref{P_6}) holds with equality only when the subgraph spanned by $T$ contains 12 edges and the vertex $u$ not in $T$ is adjacent with exactly 3 vertices of $T$. Hence the subgraph spanned by $T$ is really $T^{\prime}$. By ($\beta $) $u$ can be adjacent to either $b,c,e$ or $a,d,f$. In the first case $G=H_7^3$, in the second one $G=F_7^{4,1}$, as desired.

If $n=8$, $e(G)={\rm ex}(8,P_6^2)=19$ and the equality in (\ref{P_6}) implies, again, that $T$ must span $T^{\prime}$ and the remaining two vertices $u$ and $v$ are adjacent to exactly 3 vertices of $T^{\prime}$: either to the set $\{ b,c,e\}$ or to
$\{ a,d,f\}$ and $\{u,v\} $ is an edge. If both $u$ and $v$ are adjacent to $\{ b,c,e\}$ then ($\gamma$) leads to a contradiction. If one of $u$ and $v$ is adjacent to $\{ b,c,e\}$, the other one to $\{ a,d,f\}$, then $G=F_8^{4,1}$. Finally if both of them are adjacent to $\{ a,d,f\}$, then $G=F_8^{5,2}$.

Suppose now that $n=9$, when $e(G)={\rm ex}(9,P_6^2)=24$ and (\ref{P_6}) implies that the three vertices $u,v,w$ not in $T^{\prime}$ form a triangle and all three possess the properties mentioned in the previous case. If two of them are adjacent to $\{ b,c,e\}$ then ($\gamma$) gives the contradiction. If one of the them is adjacent to
$\{ b,c,e\}$, the two other ones are adjacent to $\{ a,d,f\}$, then $G=F_9^{5,2}$. Finally if all three are adjacent to $\{ a,d,f\}$, then $G=H_9^{6}$.

The case $n=10$ and $e(G)={\rm ex}(10,P_6^2)=30$ is very similar to the previous ones. If one of the new vertices,
$u,v,w,x$ is adjacent to $\{ b,c,e\}$ and the other 3 are adjacent to $\{ a,d,f\}$, then $G=H_{10}^{6}$. Here it cannot happen, by ($\delta$), that all 4 are adjacent to $\{ a,d,f\}$.

Finally let $n=11$ where $e(G)={\rm ex}(11,P_6^2)=36$. This case is different from the previous ones, since we cannot
have all the potential edges (12 in the graph spanned by $T$, 10 among the other 5 vertices $u,v,w,x,y$, and 15 between the two parts) one is missing. We distinguish 3 cases according the place of the missing edge.

\emph{\textbf{(i)}} $T^{\prime}\subset G$, $\{ u,v,w,x,y\}$ spans a copy of $K_5$, but there are only 14 edges between the two parts. Then $T^{\prime}$ has one vertex $z\in \{ a,b,c,d,e,f\}$ incident to at least two of the 14 edges. Then ($\epsilon$)
leads to a contradiction.

\emph{\textbf{(ii)}} $T^{\prime}\subset G$,  $\{ u,v,w,x,y\}$ spans a copy of $K_5$ minus one edge, say $\{ x,y\}$, and all 15 edges between the two parts are in $G$.

If two adjacent vertices from the set $\{ u,v,w,x,y\}$ are both adjacent to  $\{ b,c,e\}$ then ($\gamma$) gives the contradiction. Therefore if $x$ is adjacent to $\{ b,c,e\}$ then
$u,v$ and $w$ must be adjacent to $\{ a,d,f\}$. If $y$ is also adjacent to $\{ a,d,f\}$ then we have 4 vertices spanning a $K_4$ and all adjacent to $\{ a,d,f\}$. Then we obtain a contradiction by ($\delta$). Otherwise $y$ is adjacent to $\{ b,c,e\}$ and $G=H_{11}^6$.

Suppose now that $x$ is adjacent to $\{ a,d,f\}$. If $u,v,w$ are all adjacent to $\{ a,d,f\}$ then ($\delta$) leads to a contradiction. Hence at least one of them, say $u$ is adjacent to $\{ b,c,e\}$. But ($\gamma$) implies that two adjacent ones from from the set $\{ u,v,w,x,y\}$ cannot be adjacent to $\{ b,c,e\}$. Hence $v,w,x,y$ are all adjacent to $\{ a,d,f\}$ giving a contradiction again, by ($\delta$).

\emph{\textbf{(iii)}} $T$ spans only 11 edges, $\{ u,v,w,x,y\}$ determines a $K_5$ and all 15 edges are connecting the two parts.
Then $T$ must have a vertex incident to two edges connecting $T$ with  $\{ u,v,w,x,y\}$. Here ($\epsilon$) gives a contradiction.

Now we are ready to start the inductional step. Suppose that the statement is true for $n-6$ where $n\geq 12$. Prove it for $n$. Let $e(G)={\rm ex}(n,P_6^2)$ and suppose that $T\subset G$. We have to prove that $G$ is of the form described in the theorem. By (\ref{P_6}) we know that the equality implies that $T$ must span the the subgraph $T^{\prime}$
with 12 edges, every vertex of $G^{\prime}=G-T^{\prime}$ is adjacent either to the vertices $b,c,e$ or the vertices $a,d,f$ and $G^{\prime}$ is an extremal graph for $n-6$. That is $G^{\prime}$ is one the following graphs: $F_n^{\lceil {n-6\over 2}\rceil,j}, F_n^{{n-6\over 2}+1,j}, H_n^{\lfloor {n-6\over 2}\rfloor}, H_n^{\lceil {n-6\over 2}\rceil}, H_n^{{\lceil {n-6\over 2}\rceil +1}}$. All these graphs have $n-6$ vertices, their vertex sets are divided into two parts, $X^{\prime}$ and $Y^{\prime}$ where $|X^{\prime}|$ is either $\lfloor {n-6\over 2}\rfloor$ or $\lceil {n-6\over 2}\rceil$ or $\lceil {n-6\over 2}\rceil +1$, there is a bipartite graph between $X^{\prime}$ and $Y^{\prime}$ and $X^{\prime}$ is covered by vertex-disjoint triangles and at most one star.

Color a vertex of $G^{\prime}$ by red if it is adjacent to the vertices $b,c,e$ and blue otherwise. By ($\gamma$) two red vertices cannot be adjacent. On the other hand 4 blue vertices cannot span a path by ($\zeta$). Suppose that there
is a red vertex in  $X^{\prime}$. Then all vertices of $Y^{\prime}$ are colored blue. (It is easy to check that
$n\geq 12$ implies $|Y^{\prime}|\geq 2$.) If there are two blue vertices also in $X^{\prime}$ then they span a path of length 4 that is a contradiction. We can have one blue vertex in $X^{\prime}$ only when it contains no triangle and the center $s$ of the star is blue, the other vertices are all red. This is called the first coloring. It is easy to see that the choice $X=\{ b,c,e,s\} \cup Y^{\prime}, Y=\{ a,d,f\} \cup (X^{\prime}-\{ s\})$ defines a graph possessing the properties of the expected extremal graphs: $X$ and $Y$ span a complete bipartite graph, there are no edges within $Y$, and $X$ is covered by one triangle and one star which are vertex disjoint.

The other case is when all vertices of $X^{\prime}$ are blue. In this case no vertex of $Y^{\prime}$ can be blue, otherwise this vertex and the 3 vertices of a triangle or the center of the star with two other vertices would span a path of length 4. That is all vertices of $Y^{\prime}$ are red. This is the second coloring. Then  the choice $X=\{ b,c,e\} \cup X^{\prime}, Y=\{ a,d,f\} \cup Y^{\prime}$ defines a graph possessing the properties of the expected extremal graphs.

We have seen that $G$ has the expected structure in both cases. We only have to check the parameters. If $n\equiv 0,4,5 \pmod{6}$ then $X^{\prime}$ contains no star, the first coloring cannot occur, in the case of the second coloring 3-3 vertices are added to both parts, containing a triangle ($\{b,c,e\}$) in the $X$-part. The upper index increases by 3 in all cases when moving from $n-6$ to $n$.

Consider now the case $n\equiv 1 \pmod{6}$. If $G^{\prime}=H_{n-6}^{\lfloor {n-6\over 2}\rfloor }$ then we can proceed like in the previous cases, and $G=H_n^{\lfloor {n\over 2}\rfloor }$ is obtained. Suppose that $G^{\prime}=F_{n-6}^{\lceil {n-6\over 2}\rceil ,j}$. If $j<\lceil {n-6\over 2}\rceil$ then, again, the second coloring applies and we obtain $G=F_{n}^{\lceil {n\over 2}\rceil ,j}$. If, however,  $j=\lceil {n-6\over 2}\rceil$ then both colorings result in
 $G=F_{n}^{\lceil {n\over 2}\rceil , \lceil {n\over 2}\rceil-3}$. Let us recall that $G=F_{n}^{\lceil {n\over 2}\rceil , \lceil {n\over 2}\rceil}$ was obtained in the case when $T\not\subset G$.

 The cases $n\equiv 2,3 \pmod{6}$ can be checked similarly. 
\end{proof}
\section{Open problems}
The following paragraphs show why we think that Conjecture 1 is true.

\begin{lemma}\label{open} If the graph $G$ is obtained by adding a path of $r$ vertices to one of the classes of the complete bipartite graph $K_{n,n} (n\geq r)$ then $G$ contains the square of a path containing $\lfloor {3r\over 2}\rfloor +1$
vertices.
\end{lemma}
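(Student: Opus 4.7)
My plan is to exhibit an explicit sequence $S$ of $\lfloor 3r/2\rfloor +1$ vertices of $G$ whose consecutive and second-consecutive entries are all edges of $G$, which realizes the square of a path on that many vertices. Denote the added path in class $X$ by $x_1 x_2\ldots x_r$ and pick $\lfloor r/2\rfloor +1$ distinct vertices $y_1, y_2,\ldots$ from the opposite class $Y$; since $n\geq r \geq \lfloor r/2\rfloor +1$, such a choice is always available.

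The construction interleaves pairs of consecutive path vertices with single $y$-vertices. For $r$ even, take
$$S = y_1,\, x_1,\, x_2,\, y_2,\, x_3,\, x_4,\, y_3,\, \ldots ,\, y_{r/2},\, x_{r-1},\, x_r,\, y_{r/2+1},$$
while for $r$ odd take
$$S = y_1,\, x_1,\, x_2,\, y_2,\, x_3,\, x_4,\, \ldots ,\, y_{(r-1)/2},\, x_{r-2},\, x_{r-1},\, y_{(r+1)/2},\, x_r.$$
In both cases a direct count gives $|S| = \lfloor 3r/2\rfloor +1$, so $S$ has exactly the required length.

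It remains to verify that every pair of entries of $S$ at distance $1$ or $2$ along $S$ is an edge of $G$. Consecutive entries of $S$ are either a $y$-$x$ pair (an edge of the underlying $K_{n,n}$) or a pair of $x$'s that are consecutive on $P$ (an added path edge). For entries at distance $2$, if at least one endpoint lies in $Y$ we again use the bipartite edge; the only remaining case is two $x$-vertices separated by a single $y$-vertex in the interleaving, which by inspection of the pattern must be of the form $x_{2i},\, x_{2i+1}$ for some $i$, and is therefore an edge of $P$.

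The only conceptual step is choosing an interleaving that keeps every $x$-$x$ distance-$2$ pair consecutive on the original path while maximizing the length of the sequence; once the pattern above is fixed, checking the $(P_k)^2$ structure reduces to the case analysis above. Thus I do not anticipate a serious obstacle beyond getting the bookkeeping right for the two parities of $r$.
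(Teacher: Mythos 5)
Your construction is exactly the one in the paper: the interleaved sequence $y_1,x_1,x_2,y_2,x_3,x_4,y_3,\ldots$ (with the extra $x_r$ appended in the odd case) is precisely the square path exhibited there, and your adjacency check and vertex count are correct. No issues.
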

\begin{proof}
Suppose first that $r=2s$ is even. Let $X$ and $Y$ be the two parts, where $|X|=|Y|=n$ all edges $\{x,y\} (x\in X, y\in Y)$ are in $G$. Moreover, $X$ contains the path $\{ x_1, x_2, \ldots , x_{2s} \}$. Then the square of the path $\{ y_1, x_1, x_2, y_2, x_3, x_4, y_3, \ldots , x_{2s -1}, x_{2s}, y_{s+1} \}$ is in $G$ for an arbitrary set of distinct vertices  $y_1, y_2, \ldots , y_{s+1} \in Y$. The number of vertices of this path is really $3s+1$.

If $k=2s+1$ is an odd number then the desired path is  $\{ y_1, x_1, x_2, y_2, x_3, x_4, y_3, \ldots, \break  x_{2s-1}, x_{2s}, y_{s+1}, x_{2s+1} \}$. 
\begin{figure}[ht]
    \centering
\begin{tikzpicture}

\draw (-3.2,0)  node[above]{$x_{1}$} (-2.2,0)  node[above]{$x_{2}$}  (-1.2,0) node[above]{$x_{3}$} (-0.2,0) node[above]{$x_4$} (.8,0) node[above]{$x_5$} (1.8,0) node[above]{$x_6$} ;

\draw (-2.5,-3) node[below]{$y_1$} (-1.2,-3)  node[below]{$y_2$}  (.3,-3) node[below]{$y_3$} (1.5,-3) node[below]{$y_4$} ;

\draw (-3.2,0) -- (-2.2,0) -- (-1.2,0)  (-1.2,0) -- (-0.2,0) -- (-.8,0) (-0.2,0) --(.8,0) -- (1.8,0) ;

\draw [thick,red] (-2.5,-3)--(-3.2,0) -- (-2.2,0)   (-2.2,0)--(-1.2,-3) --(-1.2,0)  (-1.2,0)--(-0.2,0)--(.3,-3) (.3,-3)--(.8,0)--(1.8,0) (1.8,0)--(1.5,-3);

\draw [thick,blue] (-2.5,-3)--(-2.2,0)   (-0.2,0)--(-1.2,-3)--(-3.2,0)  (-1.2,0)--(.3,-3)--(1.8,0) (1.5,-3)--(.8,0);

\draw  (2.3,-3) arc (0:360:3cm and .85cm);

\draw (3.3,0) node[left]{$P_6$};

\draw (2.3,0) arc (0:360:3.0cm and 0.85cm);

\foreach \x in {-3.2,-2.2,-1.2,-0.2,.8,1.8}{

\filldraw (\x,0) circle (2pt) ;
}

\foreach \x in {-2.5,-1.2,0.3,1.5}{

\filldraw  (\x,-3) circle (2pt);
}

\end{tikzpicture}\qquad\qquad %\qquad
%\qquad
\begin{tikzpicture}

\draw  (0,0) --  (.5,1) -- (1,0) (.5,1) -- (1.5,1)  (1,0) --  (2,0) -- (2.5,1) (2,0) -- (1.5,1) (2,0)--(3,0)--(2.5,1) (2.5,1)--(3.5,1)--(3,0) (3.5,1)--(4,0)--(3,0) (3.5,1)--(4.5,1)--(4,0);

\filldraw (0,0) circle (2pt) -- (1,0) circle (2pt) --  (1.5,1) circle (2pt) -- (2.5,1) circle (2pt);
\filldraw (.5,1) circle (2pt)  (2,0) circle (2pt) (3,0) circle (2pt) (3.5,1) circle (2pt) (4,0) circle (2pt) (4.5,1) circle (2pt) ;

\draw (0,0) node[below]{$y_1$}  (1,0) node[below]{$x_2$}  (2,0) node[below]{$x_3$} (3,0) node[below]{$y_3$} (4,0) node[below]{$x_6$} (1.5,1) node[above]{$y_2$}  (2.5,1) node[above]{$x_4$}  (0.5,1) node[above]{$x_1$}  (3.5,1) node[above]{$x_5$} (4.5,1) node[above]{$y_4$}
(2.2,-1.5) node{$P^2_{10}$};
\end{tikzpicture}
    \caption{}
\end{figure}
\end{proof} 
It is easy to see, on the basis of Lemma \ref{open} that if this graph does not contain $P_k^2$ then $X$ cannot contain
a path of length $\lfloor {2k\over 3}\rfloor $. Now the obvious question is that at most how many edges can be chosen
in $X$ without having a path of given length. 
As one of the earliest results in extremal Graph Theory Erd\H{o}s and Gallai \cite{EG} proved the following result on the extremal number of paths.    
%This was answered by the following theorem of Erd\H os and Gallai, Faudree and Schelp, and Kopylov.

\begin{theorem} [\textbf{Erd\H os and Gallai}\cite{EG}] The maximum number of edges in an $n$-vertex $P_l$-free graph is ${n(l-2)\over 2}$, that is $\ex (n,P_l)\leq {n(l-2)\over 2}$ with equality if and only if $(l-1)|n$ and the graph is a vertex disjoint union of ${n\over l-1}$ complete graphs on $l-1$ vertices.
\end{theorem}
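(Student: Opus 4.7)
The plan is to prove the bound by induction on $n$ with $l$ fixed, separating the disconnected and connected cases. For the base range $n\leq l-1$ even $K_n$ is $P_l$-free, and $\binom{n}{2}=\frac{n(n-1)}{2}\leq \frac{n(l-2)}{2}$ since $n-1\leq l-2$; equality in this range occurs precisely when $n=l-1$ and $G=K_{l-1}$. This gives the desired bound (and the characterization of equality) whenever $n<l$.

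For the inductive step, assume the statement for all graphs on fewer than $n\geq l$ vertices and suppose $G$ is $n$-vertex and $P_l$-free. If $G$ is disconnected with components of sizes $n_1,\ldots,n_k$, the induction hypothesis gives $e(G)=\sum_i e(G_i)\leq \sum_i \frac{n_i(l-2)}{2}=\frac{n(l-2)}{2}$. Equality in the sum forces equality in each component, so each $G_i$ is a disjoint union of copies of $K_{l-1}$, and therefore so is $G$ (which also forces $(l-1)\mid n$). Hence the problem reduces to showing that a connected $P_l$-free graph on $n\geq l$ vertices satisfies $e(G)<\frac{n(l-2)}{2}$.

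For the connected case I would argue by contradiction using the ``minimum counterexample'' technique. Suppose $G$ is connected on $n\geq l$ vertices with $e(G)>\frac{n(l-2)}{2}$ and no $P_l$. For every vertex $v$, the graph $G-v$ is $P_l$-free on $n-1$ vertices, so the induction hypothesis yields $e(G-v)\leq \frac{(n-1)(l-2)}{2}$, and consequently $d(v)=e(G)-e(G-v)>\frac{l-2}{2}$. Thus $\delta(G)\geq \lceil (l-1)/2\rceil$. Now invoke the classical Dirac-type theorem that any graph of minimum degree $\delta$ contains a path on $\min(2\delta+1,n)$ vertices; since $n\geq l$ and $2\lceil(l-1)/2\rceil+1\geq l$, this produces a $P_l$ in $G$, contradicting the assumption.

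The main obstacle is the connected case, which depends on a long-path lemma of Dirac that itself requires justification. If one prefers a self-contained argument, the alternative is to run a Pósa-style rotation-extension analysis on a longest path $P=v_1v_2\cdots v_p$ with $p\leq l-1$: since $P$ is longest, $N(v_1)\cup N(v_p)\subseteq V(P)$, and whenever $v_1v_{i+1}$ is an edge one obtains a new longest path $v_iv_{i-1}\cdots v_1 v_{i+1}\cdots v_p$, which (together with connectivity and $n\geq l$) forces enough structure to contradict the edge count $e(G)>\frac{n(l-2)}{2}$. Tracing through either argument and insisting on equality throughout then confirms that the only extremal graphs are vertex-disjoint unions of $K_{l-1}$'s, which requires $(l-1)\mid n$.
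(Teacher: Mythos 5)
The paper does not prove this statement; it is quoted verbatim from Erd\H{o}s--Gallai \cite{EG}, so there is no internal proof to compare against and your attempt has to stand on its own. For the inequality $\ex(n,P_l)\leq \frac{n(l-2)}{2}$ your route (induction on $n$, reduction to connected components, then a minimum-degree bound combined with the long-path lemma) is a standard and workable one. Two caveats on that part: the long-path lemma you invoke is \emph{false} as you state it --- ``any graph of minimum degree $\delta$ contains a path on $\min(2\delta+1,n)$ vertices'' fails for a disjoint union of copies of $K_{\delta+1}$; the correct statement requires $G$ connected, which you do have in your setting, but the hypothesis must be stated (and the lemma itself proved, e.g.\ by the longest-path/crossing-neighbour argument, if the proof is to be self-contained).

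The genuine gap is in the equality characterization. You correctly reduce the theorem to the claim that a \emph{connected} $P_l$-free graph on $n\geq l$ vertices satisfies the \emph{strict} inequality $e(G)<\frac{n(l-2)}{2}$, but you then derive a contradiction only from $e(G)>\frac{n(l-2)}{2}$, which yields $e(G)\leq\frac{n(l-2)}{2}$ and leaves open exactly the case you need to exclude. The closing remark that ``insisting on equality throughout'' settles it does not go through as written: if $e(G)=\frac{n(l-2)}{2}$, the degree bound degrades to $d(v)\geq\frac{l-2}{2}$ for every $v$, so for even $l$ you only get $\delta(G)\geq\frac{l-2}{2}$ and the long-path lemma then guarantees only a $P_{l-1}$, not a $P_l$. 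The case can be repaired, but it needs an extra step: if $l$ is even and $\delta(G)=\frac{l-2}{2}$ is attained at $v$, then $e(G-v)=\frac{(n-1)(l-2)}{2}$, so by the induction hypothesis $G-v$ is a disjoint union of copies of $K_{l-1}$; since $n\geq l$ there is at least one such copy, $v$ has a neighbour $u$ in it (by connectivity), and a Hamiltonian path of that $K_{l-1}$ ending at $u$ followed by $v$ is a $P_l$, the desired contradiction. Without this (or an equivalent) argument, the ``only if'' direction of the equality statement is not proved.
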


% \begin{definition}
% Let $H_{n,k,s}$ be an $n$-vertex graph consisting of a complete graph $K_{k-s}$ on the set $A \cup B$, $|A|=k-2s$, $|B|=s$ and a complete bipartite graph $K_{s,n-(k-s)}$ with parts $B$ and $C$ where $A$, $B$ and $C$ form a partition of $V(H)$ (Hence $|C|=n-(k-s)$ and $n\geq k, \frac{k-1}{2}\geq s \geq 1$). Denote its size by $h(n,k,s)$.
% \end{definition}

Faudree and Schelp\cite{FS} and independently Kopylov \cite{K} improved this result determining $\ex(n,P_{l})$ for every $n>l>0$ as well as the corresponding extremal graphs.

\begin{theorem} [\textbf{Faudree and Schelp}\cite{FS} \textbf{and independently Kopylov} \cite{K}] \label{exact}
Let $n \equiv r~\pmod {l-1}$, $0 \leq r \leq l-1, l\geq 2$.~Then~
$$\ex(n,P_l)=\frac{1}{2}(l-2)n-\frac{1}{2}r(l-1-r).$$
\end{theorem}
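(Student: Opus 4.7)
My plan is to prove matching lower and upper bounds.

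\textbf{Lower bound.} Writing $n=q(l-1)+r$ with $0\le r\le l-2$, take $G^{*}=qK_{l-1}\cup K_r$. No component has $l$ vertices, so $G^{*}$ is $P_l$-free, and $e(G^{*}) = q\binom{l-1}{2}+\binom{r}{2}$ simplifies, using $q(l-1)=n-r$, to the claimed value $\tfrac{1}{2}(l-2)n-\tfrac{1}{2}r(l-1-r)$.

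\textbf{Upper bound.} I would proceed by induction on $n$. The base cases $n\le l-1$ are immediate since $e(G)\le\binom{n}{2}$ and the formula at $r=n$ yields $\binom{n}{2}$. For $n\ge l$, split according to the connectivity of $G$. If $G$ is disconnected with components $C_1,\dots,C_s$ of sizes $n_i\equiv r_i\pmod{l-1}$, the inductive hypothesis applied componentwise reduces the claim to
\[
\sum_{i=1}^{s} r_i(l-1-r_i)\ \ge\ r(l-1-r),\qquad r\equiv \textstyle\sum_i r_i\pmod{l-1}.
\]
This is proved by merging residues two at a time: when $a+b\le l-1$, the identity $a(l-1-a)+b(l-1-b)-(a+b)(l-1-a-b)=2ab$ gives the inequality, and a companion algebraic identity handles $a+b>l-1$. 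Both reflect concavity of $x\mapsto x(l-1-x)$ on $[0,l-1]$.

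The main obstacle is the connected case $n\ge l$. The natural approach is to locate a vertex of small degree and delete it. By a Dirac-type fact, a connected graph on at least $2\delta+1$ vertices contains a path on $2\delta+1$ vertices; hence a connected $P_l$-free $G$ with $n\ge l$ must contain a vertex $v$ with $d(v)\le\lfloor(l-2)/2\rfloor$, and deletion gives $e(G)\le e(G-v)+\lfloor(l-2)/2\rfloor$. A direct calculation of the difference between target and induction bound shows this single-vertex step closes the gap when $r=0$ or when $r\ge\lfloor l/2\rfloor$. However, for the residual range $1\le r\le\lfloor (l-2)/2\rfloor$ the naive deletion loses too much, and this is the hardest technical step. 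To resolve it, one exploits the near-extremal structure directly: either by stepping $n$ down by $l-1$ at a time (preserving the residue $r$) through the removal of a $K_{l-1}$-like block, justified by a block-decomposition analysis of connected $P_l$-free graphs, or by invoking Kopylov's sharper bound for $2$-connected $P_l$-free graphs and then optimizing over block trees. Either route reduces the connected case to a bounded family of inequalities quadratic in $n,l,r$, which are then routine.
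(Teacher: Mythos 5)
This statement is quoted in the paper as a classical result of Faudree--Schelp and Kopylov; the paper gives no proof of it, so there is nothing internal to compare against. Judged on its own terms, your write-up gets the easy parts right: the lower-bound construction $qK_{l-1}\cup K_r$ does evaluate to $\tfrac12(l-2)n-\tfrac12 r(l-1-r)$, the reduction of the disconnected case to $\sum_i r_i(l-1-r_i)\ge r(l-1-r)$ is correct (the two merging identities, giving $2ab$ and $2(l-1-a)(l-1-b)$ respectively, both check out), and the minimum-degree bound $\delta\le\lfloor(l-2)/2\rfloor$ for a connected $P_l$-free graph on $n\ge l$ vertices is a legitimate consequence of the Dirac/Erd\H{o}s--Gallai long-path lemma. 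Your accounting of when single-vertex deletion suffices ($r=0$ or $r\ge\lfloor l/2\rfloor$) is also accurate.

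The genuine gap is exactly where you flag it: the connected case with $1\le r\le\lfloor l/2\rfloor-1$. This is not a residual technicality but the heart of the theorem --- it is the regime in which the correction term $r(l-1-r)$ is largest and the extremal graphs are rigid --- and your proposal resolves it only by pointing at ``a block-decomposition analysis'' or ``Kopylov's sharper bound for $2$-connected $P_l$-free graphs.'' Neither is carried out. The first requires showing that an end-block of a connected $P_l$-free graph can be removed at a cost matching the clique $K_{l-1}$, which needs a quantitative bound on $e(B)$ in terms of $|B|$ and the longest path through the cut vertex; nothing in your sketch supplies this. The second assumes a theorem that is essentially of the same depth as the one being proved (it is Kopylov's main lemma), so invoking it leaves the proof incomplete rather than self-contained. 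As written, the argument establishes the bound only for disconnected graphs built from components in the favourable residue classes, and the closing claim that the remaining cases ``are then routine'' is not substantiated. To repair this you would either have to import the Faudree--Schelp/Kopylov argument explicitly (as the paper does, by citation) or actually prove the end-block removal lemma.
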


Faudree and Schelp also described the extremal graphs which are either

(a) vertex disjoint union of $m$ ($n=m(l-1)+r$) complete graphs $K_{l-1}$ and a $K_r$ or 

(b) $l$ is even and $r=\frac{l}{2}$ or $\frac{l}{2}-1$ then another extremal graph can be obtained by taking a vertex disjoint union if $t$ copies of $K_{l-1}~ (0\leq t \leq m)$ and a copy of $K_{\frac{l}{2}-1}+\overline{K}_{n-(t+\frac{1}{2})(l-1)+\frac{1}{2}}$.
Where $\overline{G}$ denotes the edge complement of the graph $G$, and $G+H$ is defined as the graph obtained from the vertex disjoint union of $G$ and $H$ together with all edges between $G$ and $H$.

We believe that the extremal graph for $\ex(n,P^2_k)$ is a complete bipartite graph plus one of the constructions above in the larger class. Check now the cases solved.

If $k=4$, by lemma 12 we cannot have a path of length 2 (that is an edge) in one side.

If $k=5$ then $l=3$, a path of length 3 is forbidden in one side. According to statements above we can have only vertex disjoint edges.

If $k=6$ then $l=4$ and a path of length 4 is forbidden in one side. Now the extremal constructions for $P_{l}$ are either $(a)$ triangles plus eventually one edge or $(b)$ $t$ triangles plus a star with $n-3t$ vertices.

These are in accordance with our results. Note that in the case of $k=7$, the value $l=4$ obtained again. The expected maximum value is the same as in the case of $k=6$, but the assumptions are weaker!

\end{document}